\newtheorem{theorem}{Theorem}[section]
\newtheorem{corollary}[theorem]{Corollary}
\newtheorem{lemma}[theorem]{Lemma}
\newtheorem{proposition}[theorem]{Proposition}
\title{Some relatives of the Catalan sequence}
\author{El\.zbieta Liszewska}
\author{Wojciech M{\l}otkowski}
\thanks{W.~M. is supported by the Polish
National Science Center grant No. 2016/21/B/ST1/00628.}
\address{Telecommunications and Teleinformatics Department (KTT),	
Wroc{\l}aw University of Science and Technology,
Wybrze\.{z}e Wyspia\'{n}skiego~27,
50-370 Wroc{\l}aw, Poland}
\address{Instytut Matematyczny,
Uniwersytet Wroc{\l}awski,
Plac~Grunwaldzki~2/4,
50-384 Wroc{\l}aw, Poland}
\email{elzbieta.liszewska@pwr.edu.pl}
\email{mlotkow@math.uni.wroc.pl}
\subjclass[2010]{Primary 44A60; Secondary 46L54, 05A15}
\keywords{Positive definite sequence, Pick function, additive free convolution}
\begin{document}

\begin{abstract}
We study a family of sequences $c_n(a_2,\ldots,a_r)$, where $r\ge2$ and $a_2,\ldots,a_r$ are real parameters.
We find a sufficient condition for positive definiteness of the sequence $c_n(a_2,\ldots,a_r)$
and check several examples from OEIS. We also study relations
of these sequences with the free and monotonic convolution.
\end{abstract}

\maketitle

\section*{Introduction}

The Catalan sequence $\binom{2n+1}{n}\frac{1}{2n+1}$:
\[
1, 1, 2, 5, 14, 42, 132, 429, 1430, 4862, 16796,\ldots
\]
(entry $A000108$ in OEIS \cite{oeis}) plays an important role in mathematics.
It counts several combinatorial objects, such as binary trees, noncrossing partitions, Dyck paths,
dissections of a polygon into triangles and many others, as can be seen in the Stanley's book~\cite{stanley2015}.
It also shows up as the moment sequence of the Marchenko-Pastur law
and (in its aerated version) of the Wigner law, which in the free probability theory
are the analogs of the Poisson and the normal law.

One of the generalizations are Fuss numbers $\binom{pn+1}{n}\frac{1}{pn+1}$,
which count for example $p$-ary trees with $np-n+1$ leaves,
see~\cite{gkp}.
The generating function
\begin{align}
\mathcal{B}_{p}(z):&=\sum_{n=0}^{\infty}\binom{np+1}{n}\frac{z^n}{np+1}\\
\intertext{satisfies equality}
\mathcal{B}_{p}(z)&=1+z\mathcal{B}_{p}(z)^{p}.\label{fussgeneratingformula}\\
\intertext{Lambert observed that the powers of $\mathcal{B}_{p}(z)$ admit quite nice
Taylor expansion:}
\mathcal{B}_{p}(z)^{r}&=\sum_{n=0}^{\infty}\binom{np+r}{n}\frac{r z^n}{np+r}.\label{lambertformula}
\end{align}
The coefficients $\binom{np+r}{n}\frac{r}{np+r}$ have also combinatorial interpretations, see~\cite{gkp},
and are called generalized Fuss numbers or Raney numbers. Formulas (\ref{fussgeneratingformula}), (\ref{lambertformula})
remain true if the parameters $p,r$ are real.

It turns out that for $p,r\in\mathbb{R}$ the sequence $\binom{np+r}{n}\frac{r}{np+r}$
is positive definite if and only if either $p\ge1, 0<r\le p$ or $p\le0,p-1\le r<0$ or $r=0$,
see \cite{mlotkowski2010,mlopezy2013,mlotkowskipenson2014binomial,forresterliu2014,liupego2014}.
Moreover, the corresponding probability distributions $\mu(p,r)$ are interesting from the point of view
of noncommutative probability, for example the free $R$-transform of $\mu(p,r)$ is $\mathcal{B}_{p-r}(z)^{r}-1$,
$\mu(1+p,1)=\mu(1,1)^{\boxtimes p}$, $p>0$, where ``$\boxtimes$" is the multiplicative free convolution,
and $\mu(p,q)\rhd\mu(p+r,r)=\mu(p+r,q+r)$, where ``$\rhd$" is the monotonic convolution, see \cite{muraki2001}.

In this paper we are going to study sequences $c_n(\mathbf{a})$ which are defined
by real parameters $\mathbf{a}=(a_2,\ldots,a_r)$ and recurrence relation (\ref{recurrenceforc}),
with $c_0(\mathbf{a})=1$. If $r=2$, $a_2=1$ then $c_n(\mathbf{a})$ is the Catalan sequence.
First we provide combinatorial motivations for such sequences, for example in terms of action of a finite family
of operators on a product.
Formula (\ref{recurrenceforc}) implies equations~(\ref{equationforcfunction})
and (\ref{equationfordfunction}) for the generating  and
the upshifted generating function $C_{\mathbf{a}}(z)$, $D_{\mathbf{a}}(z):=zC_{\mathbf{a}}(z)$.

In Section~\ref{sectiongenerating} we study these functions, in particular we give a sufficient condition when the domain of $D_{\mathbf{a}}(z)$
and $C_{\mathbf{a}}(z)$ is contained in $\mathbb{C}\setminus\mathbb{R}$.
Consequently, $D_{\mathbf{a}}$ is a Pick function and the sequence $c_n(\mathbf{a})$ is positive definite.
If this is the case then the corresponding probability distribution will be denoted $\mu(\mathbf{a})$.
We find the free $R$-transform for $\mu(\mathbf{a})$,
which is a rational function, and study the convolution semigroup
$\mu(\mathbf{a})^{\boxplus t}$. We also prove that $\mu(\mathbf{a}')\rhd\mu(\mathbf{a}'')=\mu(\mathbf{a})$,
where the sequence $\mathbf{a}$ is given by (\ref{monotonicconvolutionp}).

In Section~\ref{sectionr3} we study in details the  case $r=3$. We give formulas for $c_n(a,b)$,
free cumulants $\kappa_n(a,b)$, the generating functions $C_{a,b}(z),D_{a,b}(z)$,
and prove that the sequence $c_n(a,b)$ is positive definite if and only if $a^2+3b\ge0$.
Moreover, for $a,b>0$ the distribution $\mu(a,b)$ is infinitely divisible with respect to the additive free
convolution ``$\boxplus$".

For the case $r=4$ we provide sufficient conditions for positive definiteness
(Theorem~\ref{theoremr4realroots}, Proposition~\ref{propositionr4}).
Then we consider the symmetric case, i.e. when $a_j=0$ whenever $j$ is even, and
in the final section we provide a record
of these integer sequences from OEIS which
are of the form $c_n(\mathbf{a})$ and verify their positive definiteness.

\section{Operators on a set}\label{sectionoperators}

Suppose that $X$ is a set and that for $j=2,3,\ldots,r$ we are given some $j$-ary operators on $X$,
$F^{(j)}_{i}:X^j\to X$, $i=1,2,\ldots,a_j$, here $a_j$ are positive integers.
Put  $\mathbf{a}:=(a_2,\ldots,a_r)$ and denote by $c_{n}(\mathbf{a})$ the number of all possible
compositions of these operations applied to the product $x_0x_1\ldots x_n$.
For example, for $n=2$ we can apply either
\[
F^{(3)}_{i}(x_0, x_1, x_2)\quad\hbox{or}\quad
F^{(2)}_{i_1}(F^{(2)}_{i_2}(x_0, x_1),x_2)\quad\hbox{or}\quad
F^{(2)}_{i_1}(x_0,F^{(2)}_{i_2}(x_1, x_2)),
\]
where
$1\le i\le a_3$, $1\le i_1,i_2\le a_2$, so that $c_2(\mathbf{a})=a_3+2a_2^2$.
Each such composed operator on $x_0 x_1\ldots x_n$ can be written as
\[F^{(j_0)}_{i_0}(w_1, w_2,\ldots,w_{j_0}),\quad j_0=2,\ldots, r,\,\, i_0=1,\ldots,a_{j_0},\]
where $w_s$ is a result of a composition of operators
$F^{(j)}_{i}$ on $x_{p_{s-1}} x_{p_{s-1}+1} \ldots x_{p_{s}-1}$,
with $0=p_0<p_1<\ldots<p_{j_0}=r+1$.
Accordingly, we obtain the following recurrence relation
\begin{equation}\label{recurrenceforc}
c_{n}(\mathbf{a})=\sum_{j=2}^{r} a_j \sum_{\substack{u_1,\ldots,u_j\ge0\\u_1+\ldots+u_j=n-j+1}}
c_{u_1}(\mathbf{a})c_{u_2}(\mathbf{a})\ldots c_{u_j}(\mathbf{a}),
\end{equation}
for $n\ge1$, with the initial condition $c_0(\mathbf{a})=1$.

In a similar way one can prove that $c_n(\mathbf{a})$ is the number of:
\begin{itemize}
\item finite rooted labeled trees with $n+1$ leaves such that the nodes are of order less or equal to $r$,
and each node $v$ of order $2\le j\le r$ is given a label $\ell(v)\in\{1,2,\ldots,a_j\}$,

\item labeled sequences $\left(u_1^{\epsilon_1},\ldots,u_{s}^{\epsilon_s}\right)$,
such that $s\ge1$,
\[
u_k\in\{1,-1,-2,\ldots,1-r\},\quad u_1+\ldots+u_k>0,
\]
for $1\le k\le s$, $u_1+\ldots+u_s=1$ and if $u_k=1-j$ then $\epsilon_k\in\{1,\ldots,a_j\}$, where $a_{0}:=1$,
and $|\{j:u_j=1\}|=n+1$,

\item Dyck paths from $(0,0)$ to $(s,1)$, with labeled steps $(1,u_j^{\epsilon_{j}})$,
where $s$, $u_j$, $\epsilon_j$ satisfy the same conditions as in the previous point.
\end{itemize}

From (\ref{recurrenceforc}) the generating function
\begin{equation}\label{generatingfunctionc}
C_{\mathbf{a}}(z):=\sum_{n=0}^{\infty} c_{n}(\mathbf{a})z^n
\end{equation}
satisfies equation
\begin{equation}\label{equationforcfunction}
C_{\mathbf{a}}(z)=1+a_2 z C_{\mathbf{a}}(z)^2+
\ldots+a_r z^{r-1} C_{\mathbf{a}}(z)^{r}.
\end{equation}
We will also work with the upshifted generating function $D_{\mathbf{a}}(z):=z C_{\mathbf{a}}(z)$,
so that (\ref{equationforcfunction}) is equivalent to
\begin{equation}\label{equationfordfunction}
D_{\mathbf{a}}(z)=z+a_2 D_{\mathbf{a}}(z)^2+\ldots +a_r D_{\mathbf{a}}(z)^r,
\end{equation}
with $D_{\mathbf{a}}(0)=0$. Hence $D_{\mathbf{a}}(z)$ is the inverse function of
$P_{\mathbf{a}}(w):=w-a_2 w^2-\ldots-a_r w^r$ in a neighborhood of $w=0$.

\section{The generating functions and positive definiteness}\label{sectiongenerating}

Motivated by the previous section we are going to study sequences defined by the recurrence
relation (\ref{recurrenceforc}), with $c_0(\mathbf{a}):=1$, where
$\mathbf{a}=(a_2,\ldots,a_r)\in\mathbb{R}^{r-1}$, $r\ge2$, with $a_r\ne0$.
Then the generating function $C_{\mathbf{a}}(z)$, given by (\ref{generatingfunctionc}),
and the upshifted generating function $D_{\mathbf{a}}(z):=z C_{\mathbf{a}}(z)$,
satisfy relations (\ref{equationforcfunction}) and (\ref{equationfordfunction}),
with $D_{\mathbf{a}}(0)=0$.

First we note the following homogeneity:

\begin{proposition}
If $d\ne0$ and $\mathbf{b}=(da_2,d^2a_3,\ldots,d^{r-1} a_{r})$ then
\[
d^n c_n(\mathbf{a})=c_n(\mathbf{b}),\quad
C_{\mathbf{a}}(dz)=C_{\mathbf{b}}(z),\quad
D_{\mathbf{a}}(dz)=d D_{\mathbf{b}}(z).
\]
\end{proposition}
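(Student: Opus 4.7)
The plan is to establish the middle identity $D_{\mathbf{a}}(dz)=dD_{\mathbf{b}}(z)$ first, since (\ref{equationfordfunction}) characterizes $D_{\mathbf{a}}$ and $D_{\mathbf{b}}$ uniquely as formal power series vanishing at $0$. The other two identities will then drop out essentially for free.

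Concretely, I would define $\tilde D(z):=d^{-1}D_{\mathbf{a}}(dz)$, which is a formal power series with $\tilde D(0)=0$, and substitute $dz$ for $z$ in (\ref{equationfordfunction}) to obtain
\begin{equation*}
d\tilde D(z)=D_{\mathbf{a}}(dz)=dz+\sum_{j=2}^{r} a_j D_{\mathbf{a}}(dz)^{j}=dz+\sum_{j=2}^{r} a_j d^{j}\tilde D(z)^{j}.
\end{equation*}
Dividing by $d$ gives $\tilde D(z)=z+\sum_{j=2}^{r}(d^{j-1}a_j)\tilde D(z)^{j}=z+\sum_{j=2}^{r} b_j \tilde D(z)^{j}$, which is exactly the defining equation (\ref{equationfordfunction}) for $D_{\mathbf{b}}$. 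Since that equation together with the initial value $0$ at $z=0$ determines a unique formal power series (equivalently, since $D_{\mathbf{b}}$ is the inverse of $P_{\mathbf{b}}(w)=w-\sum b_j w^{j}$ near $0$), we conclude $\tilde D=D_{\mathbf{b}}$, i.e.\ $D_{\mathbf{a}}(dz)=dD_{\mathbf{b}}(z)$.

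Dividing both sides by $dz$ and using $D_{\mathbf{a}}(z)=zC_{\mathbf{a}}(z)$, $D_{\mathbf{b}}(z)=zC_{\mathbf{b}}(z)$ yields $C_{\mathbf{a}}(dz)=C_{\mathbf{b}}(z)$. Finally, comparing coefficients of $z^{n}$ in
\begin{equation*}
\sum_{n=0}^{\infty} c_{n}(\mathbf{a})d^{n}z^{n}=C_{\mathbf{a}}(dz)=C_{\mathbf{b}}(z)=\sum_{n=0}^{\infty} c_{n}(\mathbf{b})z^{n}
\end{equation*}
gives $d^{n}c_{n}(\mathbf{a})=c_{n}(\mathbf{b})$.

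There is no real obstacle here: the proof is a bookkeeping exercise in substitution, and the only point that deserves care is invoking uniqueness of the formal power series solution of (\ref{equationfordfunction}) with value $0$ at $z=0$ (which is what lets us conclude $\tilde D=D_{\mathbf{b}}$ rather than merely that the two series satisfy the same equation). A purely combinatorial alternative would be to prove $d^{n}c_{n}(\mathbf{a})=c_{n}(\mathbf{b})$ by induction on $n$ directly from (\ref{recurrenceforc}), noting that each summand contributes a factor $d^{j-1}$ from $b_j/a_j$ times $d^{u_1+\cdots+u_j}=d^{n-j+1}$ from the inductive hypothesis, giving $d^{n}$; the generating-function identities would then follow. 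I prefer the functional-equation route because it handles all three statements uniformly.
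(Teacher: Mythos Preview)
Your proof is correct. The paper's own proof is a single line citing (\ref{recurrenceforc}) and (\ref{generatingfunctionc}), which is precisely your ``purely combinatorial alternative'': induct on $n$ using the recurrence to get $d^{n}c_{n}(\mathbf{a})=c_{n}(\mathbf{b})$, then read off the generating-function identities. Your primary route instead starts from the functional equation (\ref{equationfordfunction}) for $D$ and invokes uniqueness of the formal inverse. Both arguments are equally short and essentially interchangeable; the functional-equation version has the minor advantage of handling all three identities in one stroke, while the recurrence version avoids any appeal to uniqueness.
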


\begin{proof}
This is a direct consequence of (\ref{recurrenceforc}) and (\ref{generatingfunctionc}).
\end{proof}

\begin{proposition}
The functions $C_{\mathbf{a}}(z)$, $D_{\mathbf{a}}(z)$ can not be extended to entire functions on $\mathbb{C}$.
\end{proposition}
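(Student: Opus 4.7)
The plan is to argue by contradiction, using the fact that $D_{\mathbf{a}}$ is, by construction, the local inverse of the polynomial $P_{\mathbf{a}}(w)=w-a_2w^2-\ldots-a_rw^r$ at the origin. Suppose $D_{\mathbf{a}}$ extends to an entire function $\widetilde{D}\colon\mathbb{C}\to\mathbb{C}$. The identity $P_{\mathbf{a}}(D_{\mathbf{a}}(z))=z$ holds in a neighborhood of $0$, so by analytic continuation it holds throughout $\mathbb{C}$, i.e.\ $P_{\mathbf{a}}\circ\widetilde{D}=\mathrm{id}_{\mathbb{C}}$.

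The key observation is that this forces $\widetilde{D}$ to be injective: if $\widetilde{D}(z_1)=\widetilde{D}(z_2)$, applying $P_{\mathbf{a}}$ gives $z_1=z_2$. I would then invoke the classical fact (a standard consequence of the Casorati--Weierstrass / Picard theorem applied at $\infty$, or equivalently the fact that an entire function with a pole at infinity is a polynomial) that an injective entire function on $\mathbb{C}$ is affine. Thus $\widetilde{D}(z)=\alpha z+\beta$ for some constants. From the initial data $\widetilde{D}(0)=D_{\mathbf{a}}(0)=0$ and $\widetilde{D}'(0)=C_{\mathbf{a}}(0)=c_0(\mathbf{a})=1$ we conclude $\widetilde{D}(z)=z$.

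Substituting back into $P_{\mathbf{a}}\circ\widetilde{D}=\mathrm{id}$ yields $P_{\mathbf{a}}(z)=z$ identically, contradicting $a_r\neq 0$ and $r\geq 2$. This handles $D_{\mathbf{a}}$; for $C_{\mathbf{a}}$, if it were entire then so would be $D_{\mathbf{a}}(z)=zC_{\mathbf{a}}(z)$, reducing to the case already settled.

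The only nontrivial ingredient is the classification of entire injections as affine maps. Everything else is formal: analytic continuation of the functional equation, reading off $\alpha$ and $\beta$ from the first two Taylor coefficients, and comparing degrees. I do not anticipate any genuine obstacle; the proof is essentially two lines once injectivity is in hand.
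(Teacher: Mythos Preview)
Your argument is correct and complete, but it follows a different route from the paper's. The paper argues by dichotomy: an entire extension of $D_{\mathbf{a}}$ cannot be a polynomial (the two sides of $D_{\mathbf{a}}(z)=z+a_2D_{\mathbf{a}}(z)^2+\ldots+a_rD_{\mathbf{a}}(z)^r$ would have different degrees), hence would have an essential singularity at $\infty$; then Casorati--Weierstrass produces a sequence $z_n\to\infty$ with $D_{\mathbf{a}}(z_n)\to 0$, which the functional equation forbids since it would force $z_n\to 0$. Your approach instead exploits that $P_{\mathbf{a}}\circ\widetilde{D}=\mathrm{id}$ makes $\widetilde{D}$ injective, and then invokes the classification of injective entire functions as affine maps. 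Both rest on the same complex-analytic fact about behavior near $\infty$ (the classification itself is usually proved via Casorati--Weierstrass or Picard), but your packaging is arguably cleaner: injectivity drops out for free from the existence of a left inverse, and the contradiction with $a_r\ne0$ is reached without separately handling the polynomial case.
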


\begin{proof}
Assume that $D_{\mathbf{a}}(z)$ is an entire function.
It can not be a polynomial for otherwise each side of (\ref{equationfordfunction})
would be a polynomial of different degree. Hence $D(z)$ is essentially singular at infinity.
By the Casorati-Weierstrass theorem, there exists a sequence $z_n\in\mathbb{C}$
such that $|z_n|\to\infty$ and $D_{\mathbf{a}}(z_n)\to0$, however this is impossible due to (\ref{equationfordfunction}).
Therefore $D_{\mathbf{a}}(z)$, and consequently $C_{\mathbf{a}}(z)$, can not be extended to entire functions.
\end{proof}

Now we will study a possible domain of the functions $C_{\mathbf{a}}(z)$, $D_{\mathbf{a}}(z)$.
Denote by $\mathcal{N}_{\mathbf{a}}$ the set of all such $z\in\mathbb{C}$ that the equation
\begin{equation}\label{equationwz}
w=z+a_2 w^2+\ldots+a_r w^r
\end{equation}
has a multiple solution. Put
\begin{equation}\label{polynomialpa}
P_{\mathbf{a}}(w):=w-a_2 w^2-\ldots-a_r w^r,
\end{equation}
so that $z\in\mathcal{N}_{\mathbf{a}}$ if and only if $P_{\mathbf{a}}(w)-z$ has a multiple root.

\begin{proposition}
A complex number $z_0$ belongs to $\mathcal{N}_{\mathbf{a}}$ if and only if
there exists $w_0\in\mathbb{C}$ such that $P_{\mathbf{a}}(w_0)=z_0$ and $P_{\mathbf{a}}'(w_0)=0$.
\end{proposition}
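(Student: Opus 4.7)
The plan is to reduce the equivalence to the classical algebraic fact that $w_0$ is a multiple root of a polynomial $Q(w)\in\mathbb{C}[w]$ if and only if $Q(w_0)=0$ and $Q'(w_0)=0$. Unpacking the definition of $\mathcal{N}_{\mathbf{a}}$ given just before the statement, ``$z_0\in\mathcal{N}_{\mathbf{a}}$'' says exactly that equation (\ref{equationwz}) with $z=z_0$, or equivalently $P_{\mathbf{a}}(w)=z_0$, has a multiple solution in $w$; and this means that the polynomial $Q(w):=P_{\mathbf{a}}(w)-z_0\in\mathbb{C}[w]$ has a multiple root. Since the constant $z_0$ differentiates to $0$ with respect to $w$, we have $Q'(w)=P_{\mathbf{a}}'(w)$, so the conjunction $Q(w_0)=0$ and $Q'(w_0)=0$ translates verbatim into $P_{\mathbf{a}}(w_0)=z_0$ and $P_{\mathbf{a}}'(w_0)=0$.

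For the forward direction I would take a multiple root $w_0$ of $Q$ (provided by $z_0\in\mathcal{N}_{\mathbf{a}}$), use that $(w-w_0)^2$ divides $Q(w)$ in $\mathbb{C}[w]$, and read off both $Q(w_0)=0$ and $Q'(w_0)=0$, which by the identification above give the desired $w_0$. For the converse I would start from $P_{\mathbf{a}}(w_0)=z_0$ and $P_{\mathbf{a}}'(w_0)=0$; the first equation yields $Q(w_0)=0$, so I may write $Q(w)=(w-w_0)R(w)$ with $R\in\mathbb{C}[w]$, then differentiate and evaluate at $w_0$ to obtain $R(w_0)=Q'(w_0)=0$. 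Hence $(w-w_0)$ divides $R(w)$, so $(w-w_0)^2$ divides $Q(w)$, showing that $w_0$ is a multiple root and therefore $z_0\in\mathcal{N}_{\mathbf{a}}$.

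There is essentially no obstacle to overcome here: the proposition is a direct reformulation of the derivative criterion for repeated roots, applied to the polynomial $P_{\mathbf{a}}(w)-z_0$. The only small point to keep track of is that the variable of differentiation is $w$ (not $z$), so that $\partial_w(P_{\mathbf{a}}(w)-z_0)=P_{\mathbf{a}}'(w)$; once this is observed, the argument is a few lines and does not require any new ideas beyond basic polynomial algebra.
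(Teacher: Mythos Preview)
Your proposal is correct and follows essentially the same route as the paper: both directions are proved via the derivative criterion for multiple roots applied to $Q(w)=P_{\mathbf{a}}(w)-z_0$, factoring out $(w-w_0)$ and checking that the cofactor also vanishes at $w_0$. The only cosmetic difference is that you name the auxiliary polynomial $Q$ and spell out $Q'=P_{\mathbf{a}}'$ explicitly, while the paper works directly with $P_{\mathbf{a}}(w)-z_0$.
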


\begin{proof}
If $z_0\in\mathcal{N}_{\mathbf{a}}$ then $P_{\mathbf{a}}(w)-z_0=(w-w_0)^2 Q(w)$ for some
$w_0\in\mathbb{C}$ and a polynomial $Q(w)$. Then $P_{\mathbf{a}}(w_0)=z_0$ and $P_{\mathbf{a}}'(w_0)=0$.

On the other hand, if $P_{\mathbf{a}}(w_0)=z_0$ and $P_{\mathbf{a}}'(w_0)=0$
then $P_{\mathbf{a}}(w)-z_0=(w-w_0)Q_1(w)$ for a certain polynomial $Q_1$,
and the assumption $P_{\mathbf{a}}'(w_0)=0$ implies that $Q_1(w_0)=0$, hence $z_0\in\mathcal{N}_{\mathbf{a}}$.
\end{proof}

\begin{corollary}\label{corollaryrealroots}
If all the roots of $P_{\mathbf{a}}'(w)$ are real then $\mathcal{N}_{\mathbf{a}}\subseteq\mathbb{R}$.
\end{corollary}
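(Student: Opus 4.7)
The plan is to deduce the corollary directly from the previous proposition, which characterizes $\mathcal{N}_{\mathbf{a}}$ as the image under $P_{\mathbf{a}}$ of the zero set of $P_{\mathbf{a}}'$. In other words, every $z_0 \in \mathcal{N}_{\mathbf{a}}$ can be written as $z_0 = P_{\mathbf{a}}(w_0)$ for some critical point $w_0$ of $P_{\mathbf{a}}$.

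First I would pick an arbitrary $z_0 \in \mathcal{N}_{\mathbf{a}}$ and apply the previous proposition to obtain $w_0 \in \mathbb{C}$ with $P_{\mathbf{a}}'(w_0) = 0$ and $P_{\mathbf{a}}(w_0) = z_0$. By hypothesis, every zero of $P_{\mathbf{a}}'$ lies in $\mathbb{R}$, so $w_0 \in \mathbb{R}$. Since the parameters $a_2, \ldots, a_r$ are real, the polynomial $P_{\mathbf{a}}$ has real coefficients and therefore maps real numbers to real numbers. Hence $z_0 = P_{\mathbf{a}}(w_0) \in \mathbb{R}$, and the inclusion $\mathcal{N}_{\mathbf{a}} \subseteq \mathbb{R}$ follows.

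There is essentially no obstacle here: the content is entirely contained in the preceding proposition together with the fact that $P_{\mathbf{a}}$ has real coefficients, so the proof is a one-line reduction.
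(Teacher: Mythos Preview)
Your proof is correct and is exactly the intended argument: the corollary is stated in the paper without proof because it follows immediately from the preceding proposition together with the fact that $P_{\mathbf{a}}$ has real coefficients, precisely as you wrote.
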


Note that cardinality of $\mathcal{N}_{\mathbf{a}}$ is at most $r-1$
and if $z\in\mathcal{N}_{\mathbf{a}}$ then $\overline{z}\in\mathcal{N}_{a}$.
In fact, $\mathcal{N}_{\mathbf{a}}$ consists of all those $z_0\in\mathbb{C}$ for which
some of the branches $w_0(z),\ldots,w_{r-1}(z)$ of the set of solutions of (\ref{equationwz})
meet, while we are interested in just one of these branches, namely that
which satisfies $w_0(0)=0$.
It may happen that $0\in\mathcal{N}_{\mathbf{a}}$, but $w=0$
is a single root of the equation $P_{\mathbf{a}}(w)=0$.
Therefore we can uniquely define $D_{\mathbf{a}}(z)$ as an analytic function at least on the set
\[
\mathcal{D}_{\mathbf{a}}:=\mathbb{C}\setminus\{ t z_0:z_0\in\mathcal{N}_{\mathbf{a}}, z_0\ne0, t\ge1\}
\]
in such a way that (\ref{equationfordfunction}) is satisfied, $D_{\mathbf{a}}(0)=0$ and $D_{\mathbf{a}}\left(\overline{z}\right)=\overline{D_{\mathbf{a}}(z)}$
for $z\in\mathcal{D}_{\mathbf{a}}$.
Note that (\ref{equationfordfunction}) implies that $D_{\mathbf{a}}(z)$ is injective on its domain.

Recall that a sequence $\left\{c_n\right\}_{n=0}^{\infty}$ of real numbers
is called \textit{positive definite} if
\[
\sum_{i,j\ge0}c_{i+j}x_i x_j\ge0
\]
holds for every sequence of real numbers $x_i$ with finite number of nonzero terms.
Equivalently, there exists a positive measure $\mu$ on $\mathbb{R}$
such that $c_n$ are moments of $\mu$, i.e. $c_n=\int_{\mathbb{R}} t^n\mu(dt)$
for $n=0,1,\ldots$.
The moment generating function, the upshifted moment generating function and the Cauchy
transform of $\mu$ are given by
\begin{equation}
C_{\mu}(z):=\int_{\mathbb{R}}\frac{\mu(dt)}{1-t z},\quad
D_{\mu}(z):=\int_{\mathbb{R}}\frac{z \mu(dt)}{1-t z},\quad
G_{\mu}(z):=\int_{\mathbb{R}}\frac{\mu(dt)}{z-t},
\end{equation}
so that $D_{\mu}(z)=z C_{\mu}(z)$ and $D_{\mu}(z)=G_{\mu}(1/z)$.

We are now interested for which real parameters $a_2,\ldots,a_r$ the sequence $c_{n}(\mathbf{a})$,
defined by (\ref{recurrenceforc}), with $c_0(\mathbf{a})=1$,
is positive definite.

\begin{theorem}\label{theorempositivedefinite}
If $\mathcal{N}_{\mathbf{a}}\subseteq\mathbb{R}$ then the sequence $c_n(\mathbf{a})$ is positive definite.
\end{theorem}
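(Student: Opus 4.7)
The plan is to prove that $D_{\mathbf{a}}$ is a Pick function (maps the upper half-plane into itself) on $\mathbb{C} \setminus \mathbb{R}$, and then invoke the Nevanlinna representation for Cauchy transforms to extract a probability measure whose moments are $c_n(\mathbf{a})$.

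\textbf{Step 1 (domain).} The hypothesis $\mathcal{N}_{\mathbf{a}} \subseteq \mathbb{R}$ ensures that the excluded set $\{tz_0 : z_0 \in \mathcal{N}_{\mathbf{a}}\setminus\{0\},\, t\ge 1\}$ defining $\mathcal{D}_{\mathbf{a}}$ is contained in $\mathbb{R}$. Hence $\mathbb{C} \setminus \mathbb{R} \subseteq \mathcal{D}_{\mathbf{a}}$, so $D_{\mathbf{a}}$ is a single-valued analytic function on $\mathbb{C}\setminus\mathbb{R}$ satisfying (\ref{equationfordfunction}) and the reality condition $D_{\mathbf{a}}(\overline{z}) = \overline{D_{\mathbf{a}}(z)}$.

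\textbf{Step 2 ($D_{\mathbf{a}}$ is a Pick function).} Take $z \in \mathbb{C}^+$ and set $w := D_{\mathbf{a}}(z)$. If $w$ were real, then the right-hand side of (\ref{equationfordfunction}), namely $z + a_2 w^2 + \ldots + a_r w^r$, would have nonzero imaginary part $\operatorname{Im} z \ne 0$, contradicting the reality of $w$. Thus $D_{\mathbf{a}}$ never takes real values on the connected set $\mathbb{C}^+$, so $D_{\mathbf{a}}(\mathbb{C}^+)$ lies entirely in $\mathbb{C}^+$ or entirely in $\mathbb{C}^-$. Since $D_{\mathbf{a}}(z) = z + O(z^2)$ near the origin, for small $z \in \mathbb{C}^+$ we have $D_{\mathbf{a}}(z) \in \mathbb{C}^+$, so the first alternative holds: $D_{\mathbf{a}}(\mathbb{C}^+) \subseteq \mathbb{C}^+$.

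\textbf{Step 3 (Cauchy transform).} Define $G(z) := D_{\mathbf{a}}(1/z)$, analytic wherever $1/z \in \mathcal{D}_{\mathbf{a}}$, in particular on $\mathbb{C}^+$. Since $z \mapsto 1/z$ sends $\mathbb{C}^+$ to $\mathbb{C}^-$ and, by Step 2 together with conjugation symmetry, $D_{\mathbf{a}}(\mathbb{C}^-) \subseteq \mathbb{C}^-$, we conclude $G(\mathbb{C}^+) \subseteq \mathbb{C}^-$. Moreover $z G(z) = C_{\mathbf{a}}(1/z) \to C_{\mathbf{a}}(0) = 1$ as $z \to \infty$ non-tangentially.

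\textbf{Step 4 (Nevanlinna / Hamburger).} A function $G$ analytic on $\mathbb{C}^+$ with $G(\mathbb{C}^+)\subseteq \mathbb{C}^-$ and $\lim_{y\to\infty} iy \, G(iy) = 1$ is the Cauchy transform of a unique probability measure $\mu$ on $\mathbb{R}$, that is, $G(z) = \int_{\mathbb{R}} (z-t)^{-1}\,\mu(dt)$. Expanding both $G(z) = D_{\mathbf{a}}(1/z) = \sum_{n\ge 0} c_n(\mathbf{a}) z^{-n-1}$ and the integral representation $G(z) = \sum_{n\ge 0} z^{-n-1}\int_{\mathbb{R}} t^n\,\mu(dt)$ in powers of $1/z$ and comparing coefficients yields $c_n(\mathbf{a}) = \int_{\mathbb{R}} t^n\,\mu(dt)$, which is exactly the positive-definiteness of the sequence $c_n(\mathbf{a})$.

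The main obstacle is Step 2: ruling out that $D_{\mathbf{a}}(z)$ could be real for $z$ in the upper half-plane. The functional equation (\ref{equationfordfunction}) makes this transparent, since the equation forces $\operatorname{Im} z = \operatorname{Im} w - \sum_j a_j \operatorname{Im}(w^j)$, which vanishes whenever $w$ is real. The rest is a standard Nevanlinna-type argument plus the asymptotic normalization coming from $c_0(\mathbf{a})=1$.
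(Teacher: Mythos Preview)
Your proof is correct and follows essentially the same strategy as the paper: use the hypothesis to make $D_{\mathbf{a}}$ analytic on $\mathbb{C}\setminus\mathbb{R}$, show it is a Pick function via the functional equation, and conclude that $D_{\mathbf{a}}(1/z)$ is a Cauchy transform. Your Step~2 is actually a bit cleaner than the paper's version---the paper expands $\Im P_{\mathbf{a}}(w)$ explicitly to see the factor of $y=\Im w$, whereas you read the same fact directly off (\ref{equationfordfunction})---and your Steps~3--4 spell out the Nevanlinna normalization $iy\,G(iy)\to 1$ that the paper leaves implicit.
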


\begin{proof}
By the assumptions, $D_{\mathbf{a}}(z)$ can be defined as an analytic function
on $\mathcal{D}_{\mathbf{a}}\subseteq\mathbb{C}\setminus\mathbb{R}$
in such a way that $D_{\mathbf{a}}(\overline{z})=\overline{D_{\mathbf{a}}(z)}$ and $D_{\mathbf{a}}(0)=0$.
For $w=x+y\mathrm{i}$
\begin{multline*}
\Im\left(w-a_2 w^2-\ldots-a_r w^r\right)\\
=y\left(1-\sum_{k=2}^{r} a_k \sum_{j=0}^{\lfloor(k-1)/2\rfloor}
\binom{k}{2j+1}x^{k-2j-1}y^{2j}(-1)^j\right)
\end{multline*}
so that for $|z|$ small, $z\in\mathbb{C}\setminus\mathbb{R}$, the sign of $\Im D_{\mathbf{a}}(z)$ coincides with that of $\Im z$.
Now one can observe from (\ref{equationfordfunction}) that $\Im D_{\mathbf{a}}(z)$ never vanish on $\mathbb{C}\setminus\mathbb{R}$.
Therefore if $\Im z>0$ (resp. $<0$) then $\Im D_{\mathbf{a}}(z)>0$ (resp. $<0$), i.e. $D_{\mathbf{a}}(z)$ is a Pick function,
which implies that $D_{\mathbf{a}}(1/z)$ is the Cauchy transform of a probability measure on $\mathbb{R}$.
\end{proof}

\begin{corollary}\label{corollarypositivedefinite}
If all the roots of $P_{\mathbf{a}}'(w)$ are real then $c_n(\mathbf{a})$ is positive definite.
\end{corollary}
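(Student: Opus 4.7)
The statement follows immediately by chaining together two results already established in this section. The plan is to invoke Corollary \ref{corollaryrealroots} and then Theorem \ref{theorempositivedefinite}, so the argument reduces to a single sentence.

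More precisely, I would first observe that the hypothesis ``all the roots of $P_{\mathbf{a}}'(w)$ are real'' is exactly the hypothesis of Corollary \ref{corollaryrealroots}, which yields $\mathcal{N}_{\mathbf{a}}\subseteq\mathbb{R}$. Recall this was proved by noting that each $z_0\in\mathcal{N}_{\mathbf{a}}$ has the form $z_0=P_{\mathbf{a}}(w_0)$ for some root $w_0$ of $P_{\mathbf{a}}'$, and since $P_{\mathbf{a}}$ has real coefficients, $P_{\mathbf{a}}(w_0)\in\mathbb{R}$ whenever $w_0\in\mathbb{R}$.

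Next I would apply Theorem \ref{theorempositivedefinite}, whose hypothesis $\mathcal{N}_{\mathbf{a}}\subseteq\mathbb{R}$ is now satisfied. This produces the positive definiteness of $c_n(\mathbf{a})$, since the theorem guarantees that $D_{\mathbf{a}}(1/z)$ is the Cauchy transform of a probability measure on $\mathbb{R}$, whose moment sequence is $c_n(\mathbf{a})$.

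There is no real obstacle here: the corollary is simply a convenient packaging of the two preceding results into a criterion that is easier to check in practice (one only needs to examine the roots of a polynomial of degree $r-1$, rather than analyze the set $\mathcal{N}_{\mathbf{a}}$ directly). So the proof I would write is essentially: ``Combine Corollary \ref{corollaryrealroots} with Theorem \ref{theorempositivedefinite}.''
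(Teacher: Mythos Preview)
Your proposal is correct and matches the paper's approach exactly: the corollary is stated without proof in the paper, as it follows immediately from Corollary~\ref{corollaryrealroots} and Theorem~\ref{theorempositivedefinite}, which is precisely the chain you describe.
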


If the sequence $c_n(\mathbf{a})$ is positive definite then we will denote
by $\mu(\mathbf{a})$ the probability measure for which the numbers $c_n(\mathbf{a})$ are moments.
Since the domain of the generating function $C_{\mathbf{a}}(z)$ contains a neighborhood of $0$,
such a measure is unique and has compact support.

\textbf{Example} (A063020). Take $\mathbf{a}=(1,1,-1)$.
Then $P_{\mathbf{a}}(w)=w-w^2-w^3+w^4$ and its derivative $P_{\mathbf{a}}'(w)=(w-1)(4w^2+w-1)$
has only real roots. Therefore the sequence A063020:
\[
1, 1, 3, 9, 32, 119, 466, 1881, 7788, 32868, 140907, 611871,\ldots
\]
(with the initial $0$ term skipped) is positive definite.

The following example shows that the converse of Corollary~\ref{corollaryrealroots} is not true.

\textbf{Example} (A121988).
Take $\mathbf{a}=(2,-2,1)$. Then $P_{\mathbf{a}}(w)=w-2w^2+2w^3-w^4$,
$P_{\mathbf{a}}'(w)=1-4w+6w^2-4w^3$. The roots of $P_{\mathbf{a}}'(w)$ are $1/2,1/2\pm \mathrm{i}/2$
and $P_{\mathbf{a}}(1/2\pm \mathrm{i}/2)=1/4$, so $\mathcal{N}_{2,-2,1}=\{3/16,1/4\}$ and the sequence A121988:
\[
1, 2, 6, 21, 80, 322, 1348, 5814, 25674, 115566,\ldots
\]
is positive definite.

\section{$R$-transform and free additive convolution semigroups}\label{sectionfree}

For a sequence $1=c_0,c_1,c_2\ldots$, with generating function
$C(z):=\sum_{n=0}^{\infty} c_n z^n$ (possibly a formal power series),
the \textit{free $R$-transform} of $c_n$, or of $C(z)$, is defined by
\begin{equation}\label{freertransformdefinition}
1+R(zC(z))=C(z).
\end{equation}
The coefficients $\kappa_n$ in the Taylor expansion $R(z)=\sum_{n=1}^{\infty} \kappa_n z^n$
are called \textit{free cumulants} of the sequence $c_n$.
If $c_n$ are moments of a probability distribution on $\mathbb{R}$, i.e. $c_n=\int_{\mathbb{R}} x^n\mu(dx)$,
$n=0,1,\ldots$, then $R$ (resp $\kappa_n$) is called the \textit{free $R$-transform} (resp. the \textit{free cumulants}) of $\mu$.
If $R_1(z),R_2(z),R(z)$ are $R$-transforms of probability distributions $\mu_1,\mu_2,\mu$, respectively,
and if $t\ge1$ then $R_1(z)+R_2(z)$, $t R(z)$ are $R$-transforms of certain probability
distributions which are denoted $\mu_1\boxplus\mu_2$, $\mu^{\boxplus t}$, respectively,
see \cite{vdn,nicaspeicherlectures,mingospeicher} for details.

Now take $\mathbf{a}=(a_2,\ldots,a_r)\in\mathbb{R}^{r-1}$, with $r\ge2$, $a_r\ne0$.

\begin{proposition}\label{propositionrtransform}
The $R$ transform of the sequence $c_n(\mathbf{a})$ is
\begin{equation}\label{freertransform}
R_{\mathbf{a}}(z)=\frac{a_2 z+ \ldots+a_r z^{r-1}}{1-a_2 z-\ldots-a_r z^{r-1}}.
\end{equation}
\end{proposition}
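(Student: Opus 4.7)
The plan is to use the defining equation (\ref{equationfordfunction}) for $D_{\mathbf{a}}(z)$, which says exactly that $D_{\mathbf{a}}$ is the compositional inverse of $P_{\mathbf{a}}(w)=w-a_2w^2-\ldots-a_rw^r$ near the origin, together with the identity $D_{\mathbf{a}}(z)=zC_{\mathbf{a}}(z)$, to convert the implicit definition (\ref{freertransformdefinition}) of the $R$-transform into a closed-form rational expression.

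Concretely, I would introduce the substitution $w:=D_{\mathbf{a}}(z)=zC_{\mathbf{a}}(z)$ and plug it into (\ref{freertransformdefinition}), which becomes
\[
1+R_{\mathbf{a}}(w)=C_{\mathbf{a}}(z)=\frac{D_{\mathbf{a}}(z)}{z}=\frac{w}{z}.
\]
Then I would apply (\ref{equationfordfunction}) to express $z$ in terms of $w$, namely
\[
z=P_{\mathbf{a}}(w)=w\bigl(1-a_2w-\ldots-a_rw^{r-1}\bigr),
\]
so that
\[
1+R_{\mathbf{a}}(w)=\frac{w}{w(1-a_2w-\ldots-a_rw^{r-1})}=\frac{1}{1-a_2w-\ldots-a_rw^{r-1}}.
\]
Subtracting $1$ from both sides and putting the right-hand side over a common denominator produces exactly (\ref{freertransform}).

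The argument takes place at the level of formal power series in $z$ around $0$, where $D_{\mathbf{a}}(z)$ is a well-defined series with $D_{\mathbf{a}}(0)=0$ and positive radius of convergence, so there is no analytic subtlety to manage; the cancellation of the factor $w$ in the final step is legitimate because $w=D_{\mathbf{a}}(z)$ vanishes only at $z=0$ and we are manipulating series in the variable $w$. The only real step in the proof is the observation that (\ref{equationfordfunction}) does precisely the work of inverting $D_{\mathbf{a}}$, so I would not expect any substantive obstacle.
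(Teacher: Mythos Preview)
Your proof is correct and is essentially the same argument as the paper's: the paper simply writes the identity
\[
1+ \frac{a_2 zC_{\mathbf{a}}(z)+\ldots+a_{r} z^{r-1} C_{\mathbf{a}}(z)^{r-1}}{1-a_2 zC_{\mathbf{a}}(z)-\ldots - a_{r} z^{r-1} C_{\mathbf{a}}(z)^{r-1}}=C_{\mathbf{a}}(z)
\]
and observes that it is a rearrangement of (\ref{equationforcfunction}), which is exactly your computation with $w=zC_{\mathbf{a}}(z)$ unwound. The only difference is presentational: you derive $R_{\mathbf{a}}$ via the substitution, whereas the paper states the candidate and checks it.
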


\begin{proof}
It is sufficient to note that equation (\ref{equationforcfunction}) can be written as
\[
1+ \frac{a_2 zC_{\mathbf{a}}(z)+\ldots+a_{r} z^{r-1} C_{\mathbf{a}}(z)^{r-1}}{1-a_2 zC_{\mathbf{a}}(z)-\ldots - a_{r} z^{r-1} C_{\mathbf{a}}(z)^{r-1}}=C_{\mathbf{a}}(z).\]
\end{proof}

Denote by $c_n^{\boxplus t}(\mathbf{a})$ the sequence for which the $R$-transform is $t\cdot R_{\mathbf{a}}(z)$.
The generating and the upshifted generating functions will be denoted $C_{\mathbf{a}}^{\boxplus t}(z),D_{\mathbf{a}}^{\boxplus t}(z)$.
If $c_n^{\boxplus t}(\mathbf{a})$ is positive definite then the corresponding probability distribution
on $\mathbb{R}$ will be denoted $\mu(\mathbf{a})^{\boxplus t}$.
If $0<t_1\le t_2$ and $c_n^{\boxplus t_1}(\mathbf{a})$ is positive definite then so is $c_n^{\boxplus t_2}(\mathbf{a})$
and $\mu(\mathbf{a})^{\boxplus t_2}=\left(\mu(\mathbf{a})^{\boxplus t_1}\right)^{\boxplus t_2/t_1}$.

\begin{proposition}\label{propositionfreepower}
The function $D_{\mathbf{a}}^{\boxplus t}(z)$ satisfies
\begin{equation}\label{freepowerformulad}
z=\frac{D_{\mathbf{a}}^{\boxplus t}(z)-a_2 D_{\mathbf{a}}^{\boxplus t}(z)^2-\ldots-a_r D_{\mathbf{a}}^{\boxplus t}(z)^r}
{1+(t-1)\left(a_2 D_{\mathbf{a}}^{\boxplus t}(z)+\ldots+a_r D_{\mathbf{a}}^{\boxplus t}(z)^{r-1}\right)}
\end{equation}
so that $D_{\mathbf{a}}^{\boxplus t}(z)$ is the composition inverse function to
\begin{equation}\label{freepowerformulaw}
z=\frac{w-a_2 w^2-\ldots-a_r w^r}
{1+(t-1)\left(a_2 w+\ldots+a_r w^{r-1}\right)}.
\end{equation}
\end{proposition}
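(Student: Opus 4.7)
The plan is to unwind the definition of the $R$-transform together with the explicit formula (\ref{freertransform}) and then perform a short algebraic manipulation.

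First I would recall that by definition (\ref{freertransformdefinition}), applied with $R$ replaced by $t R_{\mathbf{a}}$, the generating function $C_{\mathbf{a}}^{\boxplus t}(z)$ satisfies
\[
C_{\mathbf{a}}^{\boxplus t}(z)=1+t R_{\mathbf{a}}\bigl(zC_{\mathbf{a}}^{\boxplus t}(z)\bigr)=1+t R_{\mathbf{a}}\bigl(D_{\mathbf{a}}^{\boxplus t}(z)\bigr).
\]
Multiplying by $z$ and using $D_{\mathbf{a}}^{\boxplus t}(z)=zC_{\mathbf{a}}^{\boxplus t}(z)$, this becomes
\[
D_{\mathbf{a}}^{\boxplus t}(z)=z+z t R_{\mathbf{a}}\bigl(D_{\mathbf{a}}^{\boxplus t}(z)\bigr).
\]

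Next I would substitute the explicit rational form of $R_{\mathbf{a}}$ from Proposition~\ref{propositionrtransform}. Writing $w:=D_{\mathbf{a}}^{\boxplus t}(z)$ and $A(w):=a_2w+\cdots+a_rw^{r-1}$ for brevity, the equation above reads $w=z+zt A(w)/(1-A(w))$, equivalently
\[
w\bigl(1-A(w)\bigr)=z\bigl(1-A(w)\bigr)+zt A(w)=z\bigl(1+(t-1)A(w)\bigr).
\]
Solving for $z$ and noting that $w(1-A(w))=w-a_2w^2-\cdots-a_rw^r$ yields exactly (\ref{freepowerformulad}).

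Finally, for the composition-inverse statement, I would observe that the right-hand side of (\ref{freepowerformulaw}), viewed as a power series in $w$, starts with $w+O(w^2)$ (its constant term vanishes and its derivative at $w=0$ equals $1$), so it admits a unique compositional inverse at the origin; since $D_{\mathbf{a}}^{\boxplus t}(z)$ vanishes at $0$ and satisfies (\ref{freepowerformulad}), it must coincide with this inverse. I do not anticipate any genuine obstacle; the only thing to keep straight is the bookkeeping between $1-A(w)$ and $1+(t-1)A(w)$ when combining the two terms over a common denominator.
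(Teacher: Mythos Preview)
Your proof is correct and follows essentially the same approach as the paper: both start from the defining relation $1+tR_{\mathbf{a}}\bigl(D_{\mathbf{a}}^{\boxplus t}(z)\bigr)=D_{\mathbf{a}}^{\boxplus t}(z)/z$ with the explicit rational form of $R_{\mathbf{a}}$, and then clear denominators. The paper simply writes this displayed identity and says ``which leads directly to the formula''; you have spelled out the intermediate algebra (introducing $A(w)$ and combining over a common denominator), which is helpful but not a different method.
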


\begin{proof}
By (\ref{freertransformdefinition}) and (\ref{freertransform}) we have
\[
1+t\frac{a_2 D_{\mathbf{a}}^{\boxplus t}(z)+\ldots
+a_r D_{\mathbf{a}}^{\boxplus t}(z)^{r-1}}{1-a_2 D_{\mathbf{a}}^{\boxplus t}(z)-\ldots-a_r D_{\mathbf{a}}^{\boxplus t}(z)^{r-1}}
=\frac{D_{\mathbf{a}}^{\boxplus t}(z)}{z}
\]
which leads directly to the formula.
\end{proof}

\section{Compositions and monotonic convolution}\label{sectionmonotonic}

For compactly supported probability distributions $\mu_1,\mu_2$ on $\mathbb{R}$,
with the moment generating functions
\[
C_{\mu_i}(z):=\int_{\mathbb{R}}\frac{\mu_i(dx)}{1-xz}=\sum_{n=0}^{\infty}\int_{\mathbb{R}} x^n\mu_i(dx)
\]
there exists unique compactly supported probability distribution $\mu_1\rhd\mu_2$ on $\mathbb{R}$,
called \textit{monotonic convolution} of $\mu_1$ and $\mu_2$, such that its moment generating function satisfies
\begin{equation}\label{monotonicc}
C_{\mu_1\rhd\mu_2}(z)=C_{\mu_1}\big(z C_{\mu_2}(z)\big)\cdot C_{\mu_2}(z).
\end{equation}
For details we refer to \cite{muraki2001}. In terms of the upshifted moment generating functions
$D_{\mu_i}(z):=z C_{\mu_i}(z),D_{\mu_1\rhd\mu_2}(z):=zC_{\mu_1\rhd\mu_2}(z)$ relation (\ref{monotonicc}) becomes
\begin{equation}\label{monotonicd}
D_{\mu_1\rhd\mu_2}(z)=D_{\mu_1}\big(D_{\mu_2}(z)\big).
\end{equation}

\begin{proposition}
Suppose that $\mathbf{a}'=(a_2',\ldots,a_{r'}')$, $\mathbf{a}''=(a_2'',\ldots,a_{r''}'')$
and the sequences $c_n(\mathbf{a}'),c_n(\mathbf{a}'')$ are positive definite,
with the corresponding probability distributions $\mu(\mathbf{a}'),\mu(\mathbf{a}'')$. Then
\[
\mu(\mathbf{a}')\rhd\mu(\mathbf{a}'')=\mu(\mathbf{a}),
\]
where the sequence $\mathbf{a}$ is given by
\begin{equation}\label{monotonicconvolutionp}
P_{\mathbf{a}}(w)=P_{\mathbf{a}''}\big(P_{\mathbf{a}'}(w)\big).
\end{equation}
\end{proposition}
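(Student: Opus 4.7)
The plan is to reduce the statement to a composition-of-inverses identity and then invoke the characterization (\ref{monotonicd}) of monotonic convolution. Recall from the paragraph following (\ref{equationfordfunction}) that each $D_{\mathbf{a}}$ is the compositional inverse of the polynomial $P_{\mathbf{a}}$ in a neighborhood of $0$. So if I can show that $D_{\mathbf{a}}=D_{\mathbf{a}'}\circ D_{\mathbf{a}''}$ as analytic germs at $0$, then (\ref{monotonicd}) will force the upshifted moment generating function of $\mu(\mathbf{a}')\rhd\mu(\mathbf{a}'')$ to coincide with $D_{\mathbf{a}}$, and reading off Taylor coefficients (together with determinacy of the moment problem for compactly supported measures) will yield the claim.

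First I would verify that $\mathbf{a}$ defined by (\ref{monotonicconvolutionp}) is a legitimate parameter vector in the sense of Section~\ref{sectiongenerating}. The chain rule, together with $P_{\mathbf{a}'}(0)=0$ and $P_{\mathbf{a}'}'(0)=P_{\mathbf{a}''}'(0)=1$, gives that $P_{\mathbf{a}''}\bigl(P_{\mathbf{a}'}(w)\bigr)$ vanishes at $w=0$ with derivative $1$ there. Its top-degree term is $-a_{r''}''\bigl(-a_{r'}'w^{r'}\bigr)^{r''}$, whose coefficient is a nonzero multiple of $a_{r''}''(a_{r'}')^{r''}$, hence nonzero. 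Thus $r=r'r''$ and $\mathbf{a}=(a_2,\ldots,a_r)\in\mathbb{R}^{r-1}$ with $a_r\neq 0$, exactly as required for the framework of Section~\ref{sectiongenerating} to apply.

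The central step is then the inversion
\[
D_{\mathbf{a}}=P_{\mathbf{a}}^{-1}=\bigl(P_{\mathbf{a}''}\circ P_{\mathbf{a}'}\bigr)^{-1}=P_{\mathbf{a}'}^{-1}\circ P_{\mathbf{a}''}^{-1}=D_{\mathbf{a}'}\circ D_{\mathbf{a}''},
\]
valid as germs of analytic functions at $0$. Since $\mu(\mathbf{a}')\rhd\mu(\mathbf{a}'')$ is a compactly supported probability measure whose upshifted moment generating function equals, by (\ref{monotonicd}), $D_{\mathbf{a}'}\circ D_{\mathbf{a}''}=D_{\mathbf{a}}$, its moment sequence is precisely $c_n(\mathbf{a})$. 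In particular $c_n(\mathbf{a})$ is positive definite, so $\mu(\mathbf{a})$ is defined, and the two measures agree.

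I do not expect any serious obstacle. The only point requiring care is that composing inverses reverses order, and this matches (\ref{monotonicd}) correctly: because $P_{\mathbf{a}'}$ appears \emph{inside} $P_{\mathbf{a}''}$ in (\ref{monotonicconvolutionp}), the factor $D_{\mathbf{a}'}$ correspondingly appears \emph{outside} $D_{\mathbf{a}''}$ in the composition, consistent with $D_{\mu_1\rhd\mu_2}=D_{\mu_1}\circ D_{\mu_2}$ when $\mu_1=\mu(\mathbf{a}')$ and $\mu_2=\mu(\mathbf{a}'')$.
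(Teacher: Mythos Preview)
Your proof is correct and follows essentially the same route as the paper: the paper's argument consists of the single observation that $D_{\mathbf{a}}$ is the compositional inverse of $P_{\mathbf{a}}$, so (\ref{monotonicconvolutionp}) inverts to $D_{\mathbf{a}}=D_{\mathbf{a}'}\circ D_{\mathbf{a}''}$, which by (\ref{monotonicd}) is exactly the upshifted moment generating function of $\mu(\mathbf{a}')\rhd\mu(\mathbf{a}'')$. Your additional verification that $\mathbf{a}$ is a legitimate parameter vector and your explicit tracking of the composition order are careful details the paper omits, but the underlying idea is identical.
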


\begin{proof}
It is a consequence of (\ref{monotonicd}) and the fact that $D_{\mathbf{a}}(z)$
is the composition inverse of $P_{\mathbf{a}}(w)$.
\end{proof}

It is natural to define $\mathbf{a}'\rhd\mathbf{a}'':=\mathbf{a}$ by formula (\ref{monotonicconvolutionp}).

\begin{corollary}\label{corollarymonotonic}
If the sequences $c_n(\mathbf{a}'),c_n(\mathbf{a}'')$ are positive definite
then so is $c_n(\mathbf{a}'\rhd\mathbf{a}'')$.
\end{corollary}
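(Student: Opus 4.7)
The plan is to deduce this directly from the preceding proposition, since the real content has already been established there. First I would invoke the hypothesis that both $c_n(\mathbf{a}')$ and $c_n(\mathbf{a}'')$ are positive definite. By definition, this means that $\mu(\mathbf{a}')$ and $\mu(\mathbf{a}'')$ exist as compactly supported probability measures on $\mathbb{R}$, whose moment generating functions are $C_{\mathbf{a}'}(z)$ and $C_{\mathbf{a}''}(z)$ respectively, and whose upshifted moment generating functions $D_{\mathbf{a}'}(z)$ and $D_{\mathbf{a}''}(z)$ are the composition inverses of $P_{\mathbf{a}'}$ and $P_{\mathbf{a}''}$ near $0$.

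Next I would appeal to Muraki's theory (the reference \cite{muraki2001} cited earlier): the monotonic convolution of two compactly supported probability measures on $\mathbb{R}$ is again a compactly supported probability measure on $\mathbb{R}$. Hence $\mu(\mathbf{a}') \rhd \mu(\mathbf{a}'')$ is a well-defined probability measure on the line, and the sequence of its moments is automatically positive definite.

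Finally, I would combine this with the previous proposition, which identifies $\mu(\mathbf{a}') \rhd \mu(\mathbf{a}'') = \mu(\mathbf{a})$ for $\mathbf{a} = \mathbf{a}' \rhd \mathbf{a}''$ defined through $P_{\mathbf{a}}(w) = P_{\mathbf{a}''}(P_{\mathbf{a}'}(w))$. Since the moments of $\mu(\mathbf{a})$ agree with $c_n(\mathbf{a}'\rhd\mathbf{a}'')$ (by uniqueness of the Taylor expansion of the composition inverse of $P_{\mathbf{a}}$ at $0$, which determines $D_{\mathbf{a}}$ and hence $C_{\mathbf{a}}$), the sequence $c_n(\mathbf{a}'\rhd\mathbf{a}'')$ is precisely the moment sequence of a probability measure, and therefore positive definite.

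There is essentially no obstacle here; the work has already been done in setting up the correspondence $\mathbf{a} \mapsto \mu(\mathbf{a})$ and in proving the composition identity (\ref{monotonicconvolutionp}). The only subtlety worth flagging in the write-up is that one must explicitly cite the fact that $\rhd$ preserves the class of probability measures (so that $\mu(\mathbf{a}')\rhd\mu(\mathbf{a}'')$ is a genuine measure and not merely a formal object), which is standard from \cite{muraki2001}.
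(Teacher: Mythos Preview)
Your argument is correct and matches the paper's intended reasoning: the corollary is stated without proof precisely because it follows immediately from the preceding proposition together with the fact (from \cite{muraki2001}) that the monotonic convolution of compactly supported probability measures is again a compactly supported probability measure. Your write-up makes explicit exactly the steps the paper leaves implicit.
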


\textbf{Example:} Take $\mathbf{a}'=(1)$, $\mathbf{a}''=(1,2)$.
Then $P_1(w)=w-w^2$, $P_{1,2}(w)=w-w^2-2w^3$ and
\[
P_{1,2}(P_1(w))=w-2w^2+5w^4-6w^5+2w^6=P_{2,0,-5,6,-2}(w),
\]
so that
\[
(1)\rhd(1,2)=(2,0,-5,6,-2).
\]
This leads to the positive definite sequence $c_n(2,0,-5,6,-2)$:
\[
1, 2, 8, 35, 170, 866, 4580, 24852, 137560, 773278,\ldots,
\]
which is absent in OEIS.
The roots of
\[
P_{2,0,-5,6,-2}'(w)=1-4w+20w^3-30w^4+12w^5\]
are
\[
\frac{1}{2},\qquad
\frac{1}{2}\pm\frac{1}{6}\sqrt{6\sqrt{7}+15},\qquad
\frac{1}{2}\pm\frac{\mathrm{i}}{6}\sqrt{6\sqrt{7}-15}
\]
and
\[
\mathcal{N}_{2,0,-5,6,-2}=\left\{\frac{-7\sqrt{7}-10}{54},\frac{7\sqrt{7}-10}{54},\frac{5}{32}\right\}.
\]
This example illustrates that the operation ``$\rhd$" does not preserve real rootedness
of $P_{\mathbf{a}}'$, but, as we will prove, it does preserve the property ``$\mathcal{N}_{\mathbf{a}}\subseteq\mathbb{R}$".

\begin{proposition}
For $\mathbf{a}'=(a_2',\ldots,a_{r'}')$, $\mathbf{a}''=(a_2'',\ldots,a_{r''}'')$ we have
\begin{equation}\label{monotonicna}
\mathcal{N}_{\mathbf{a'\rhd\mathbf{a''}}}=
\mathcal{N}_{\mathbf{a''}}\cup P_{\mathbf{a}''}\left[\mathcal{N}_{\mathbf{a'}}\right].
\end{equation}
Consequently, if $\mathcal{N}_{\mathbf{a'}}\subseteq\mathbb{R}$, $\mathcal{N}_{\mathbf{a''}}\subseteq\mathbb{R}$
then $\mathcal{N}_{\mathbf{a'}\rhd\mathbf{a''}}\subseteq\mathbb{R}$.
\end{proposition}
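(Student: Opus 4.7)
The plan is to identify $\mathcal{N}_{\mathbf{a}}$ with the set of critical values of $P_{\mathbf{a}}$, via the proposition proved earlier, and then chase a chain-rule computation. Writing $\mathbf{a}=\mathbf{a}'\rhd\mathbf{a}''$ so that $P_{\mathbf{a}}=P_{\mathbf{a}''}\circ P_{\mathbf{a}'}$, differentiation yields
\[
P_{\mathbf{a}}'(w)=P_{\mathbf{a}''}'\bigl(P_{\mathbf{a}'}(w)\bigr)\cdot P_{\mathbf{a}'}'(w).
\]
Hence the critical points of $P_{\mathbf{a}}$ are exactly those $w_0$ for which either $P_{\mathbf{a}'}'(w_0)=0$ or $P_{\mathbf{a}''}'(P_{\mathbf{a}'}(w_0))=0$. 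This is the structural heart of the argument.

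For the inclusion $\mathcal{N}_{\mathbf{a}}\subseteq\mathcal{N}_{\mathbf{a}''}\cup P_{\mathbf{a}''}[\mathcal{N}_{\mathbf{a}'}]$, I would take $z_0\in\mathcal{N}_{\mathbf{a}}$ and pick $w_0$ with $P_{\mathbf{a}}(w_0)=z_0$ and $P_{\mathbf{a}}'(w_0)=0$. Let $u_0:=P_{\mathbf{a}'}(w_0)$, so $P_{\mathbf{a}''}(u_0)=z_0$. If $P_{\mathbf{a}'}'(w_0)=0$, then $u_0\in\mathcal{N}_{\mathbf{a}'}$ and $z_0=P_{\mathbf{a}''}(u_0)\in P_{\mathbf{a}''}[\mathcal{N}_{\mathbf{a}'}]$. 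Otherwise $P_{\mathbf{a}''}'(u_0)=0$, which places $z_0$ in $\mathcal{N}_{\mathbf{a}''}$.

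For the reverse inclusions, if $z_0\in\mathcal{N}_{\mathbf{a}''}$, choose $u_0$ with $P_{\mathbf{a}''}(u_0)=z_0$ and $P_{\mathbf{a}''}'(u_0)=0$; since $P_{\mathbf{a}'}$ is a nonconstant polynomial (of degree $r'\ge 2$, with $a_{r'}'\ne0$), it is surjective on $\mathbb{C}$, so some $w_0$ satisfies $P_{\mathbf{a}'}(w_0)=u_0$, and then the chain rule gives $P_{\mathbf{a}}'(w_0)=0$, while $P_{\mathbf{a}}(w_0)=z_0$. If instead $z_0=P_{\mathbf{a}''}(u_0)$ for some $u_0\in\mathcal{N}_{\mathbf{a}'}$, pick $w_0$ with $P_{\mathbf{a}'}(w_0)=u_0$ and $P_{\mathbf{a}'}'(w_0)=0$; then again $P_{\mathbf{a}}'(w_0)=0$ and $P_{\mathbf{a}}(w_0)=P_{\mathbf{a}''}(u_0)=z_0$. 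This proves (\ref{monotonicna}).

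The consequence is immediate: since $P_{\mathbf{a}''}$ has real coefficients, it maps $\mathbb{R}$ into $\mathbb{R}$, so $\mathcal{N}_{\mathbf{a}'}\subseteq\mathbb{R}$ forces $P_{\mathbf{a}''}[\mathcal{N}_{\mathbf{a}'}]\subseteq\mathbb{R}$, and together with $\mathcal{N}_{\mathbf{a}''}\subseteq\mathbb{R}$ the union is real. I do not expect any serious obstacle here; the only subtlety worth flagging is the use of surjectivity of $P_{\mathbf{a}'}$ on $\mathbb{C}$ to lift the critical point $u_0$ of $P_{\mathbf{a}''}$ back to a critical point of the composition, which is what makes the inclusion $\mathcal{N}_{\mathbf{a}''}\subseteq\mathcal{N}_{\mathbf{a}}$ go through.
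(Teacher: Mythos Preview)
Your proof is correct and follows essentially the same route as the paper: both use the chain rule to split the critical points of $P_{\mathbf{a}''}\circ P_{\mathbf{a}'}$ into the two types, and both invoke surjectivity of $P_{\mathbf{a}'}$ onto $\mathbb{C}$ to recover $\mathcal{N}_{\mathbf{a}''}$ from the composition. The paper compresses this into two set identities, while you spell out the two inclusions, but the content is the same.
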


\begin{proof}
It suffices to observe that
\[
\left\{P_{\mathbf{a}''}(P_{\mathbf{a}'}(w)):w\in\mathbb{C},P_{\mathbf{a}''}'(P_{\mathbf{a}'}(w))=0\right\}
=\mathcal{N}_{\mathbf{a''}}
\]
(as $P_{\mathbf{a}'}$ is a function \textit{onto} $\mathbb{C}$) and
\[
\left\{P_{\mathbf{a}''}(P_{\mathbf{a}'}(w)):w\in\mathbb{C},P_{\mathbf{a}'}'(w)=0\right\}
=P_{\mathbf{a}''}\left[\mathcal{N}_{\mathbf{a'}}\right],
\]
which proves (\ref{monotonicna}).
\end{proof}

\section{The case $r=2$: Catalan numbers}\label{sectionr2}

For $r=2$, $a\in\mathbb{R}\setminus\{0\}$  we have
\begin{align*}
c_{n}(a)&= a \sum_{\substack{u_1,u_2\ge0\\u_1+u_2=n-1}}
c_{u_1}(a)c_{u_2}(a),\\
C_{a}(z)&=1+az C_{a}(z)^2,
\end{align*}
which leads to the dilated Catalan numbers
\[
c_n(a)=\binom{2n+1}{n}\frac{a^n}{2n+1},
\]
with the generating function
\[
C_{a}(z)=\frac{1-\sqrt{1-4az}}{2az}=\frac{2}{1+\sqrt{1-4az}}.
\]

It is well know that the Catalan sequence is positive definite, namely
\[
\binom{2n+1}{n}\frac{1}{2n+1}=\frac{1}{2\pi}\int_{0}^{4}x^n\sqrt{\frac{4-x}{x}}\,dx,
\]
$n=0,1,\ldots$.
Taking $P(w):=w-w^2$ we can now prove this applying Theorem~{\ref{theorempositivedefinite}}.

It is also known that $\mu(1)$ (and hence $\mu(a)$, with $a\ne0$) is infinitely divisible with respect to the additive free
convolution $\boxplus$
and the moments of $\mu(1)^{\boxplus t}$ are the Narayana polynomials (see $A001263$ in OEIS):
\[
c_n^{\boxplus t}(1)=\sum_{k=1}^{n}\binom{n}{k-1}\binom{n}{k}\frac{t^k}{m}
\]
for $n\ge1$, see \cite{nicaspeicherlectures}.
The generating function satisfies
\[
C_{1}^{\boxplus t}(z)=1+(t-1)z C_{1}^{\boxplus t}(z)+z C_{1}^{\boxplus t}(z)^2,
\]
and $\mu(1)^{\boxplus t}$ are known as the Marchenko-Pastur distributions:
\[
\mu(1)^{\boxplus t}=\max\{1-t,0\}\delta_{0}+\frac{\sqrt{4t-(x-1-t)^2}}{2\pi x}\chi_{(1+t+2\sqrt{t},1+t-2\sqrt{t})}(x)\,dx.
\]

\section{The case $r=3$}\label{sectionr3}

In this section we will confine ourselves to the case $r=3$.
Put $a:=a_2$, $b:=a_3\ne0$. The corresponding sequence, polynomial and the generating
functions will be denoted $c_n(a,b)$, $P_{a,b}(w)$, $C_{a,b}(z)$, $D_{a,b}(z)$.

\subsection{Formula for $c_n(a,b)$}

\begin{proposition}
For $a,b\in\mathbb{R}$, $n\ge0$, we have
\begin{equation}\label{r3formulac}
c_{n}(a,b)=\frac{1}{n+1}\sum_{j=0}^{\lfloor n/2\rfloor}\binom{2n-j}{n}\binom{n-j}{j}a^{n-2j}b^{j}.
\end{equation}
\end{proposition}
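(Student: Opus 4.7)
The plan is to extract $c_n(a,b)$ from the upshifted generating function $D_{a,b}(z)$ via Lagrange inversion, and then read off the explicit coefficient by expanding a negative binomial power.

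Concretely, recall from Section~\ref{sectiongenerating} that $D_{a,b}(z)=zC_{a,b}(z)$ is the compositional inverse of $P_{a,b}(w)=w-aw^2-bw^3$ in a neighbourhood of $0$. Since $D_{a,b}(z)=\sum_{n\ge0}c_n(a,b)z^{n+1}$, one has $c_n(a,b)=[z^{n+1}]D_{a,b}(z)$. Applying the Lagrange inversion formula to $D_{a,b}=P_{a,b}^{-1}$ with exponent $m=n+1$ yields
\[
c_n(a,b)=\frac{1}{n+1}\,[w^{n}]\left(\frac{w}{P_{a,b}(w)}\right)^{n+1}
=\frac{1}{n+1}\,[w^n]\,(1-aw-bw^2)^{-(n+1)}.
\]

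The next step is to expand the right-hand side by the generalized binomial series
\[
(1-aw-bw^2)^{-(n+1)}=\sum_{k=0}^{\infty}\binom{n+k}{k}(aw+bw^2)^k
=\sum_{k=0}^{\infty}\sum_{j=0}^{k}\binom{n+k}{k}\binom{k}{j}a^{k-j}b^{j}\,w^{k+j}.
\]
Extracting the coefficient of $w^n$ forces $k+j=n$, i.e.\ $k=n-j$ (with $j\le\lfloor n/2\rfloor$ to ensure $k\ge j$), and gives
\[
[w^n](1-aw-bw^2)^{-(n+1)}
=\sum_{j=0}^{\lfloor n/2\rfloor}\binom{2n-j}{n-j}\binom{n-j}{j}a^{n-2j}b^{j}.
\]
Using the symmetry $\binom{2n-j}{n-j}=\binom{2n-j}{n}$ and multiplying by $\frac{1}{n+1}$ yields formula~(\ref{r3formulac}).

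There is no real obstacle: once Lagrange inversion is invoked, the remaining work is purely formal manipulation of binomial coefficients. The only mild bookkeeping issue is getting the indexing right (the shift from $c_n$ being the coefficient of $z^{n+1}$ in $D$, not $z^n$), but this is automatic from $D_{a,b}(z)=zC_{a,b}(z)$. An alternative, purely combinatorial, route would be to verify directly from the recurrence~(\ref{recurrenceforc}) with $r=3$ that the right-hand side of (\ref{r3formulac}) satisfies the same recursion and initial condition; however, the analytic Lagrange-inversion argument is considerably shorter and uses the tools already established in Section~\ref{sectiongenerating}.
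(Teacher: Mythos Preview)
Your proof is correct and essentially identical to the paper's: both apply Lagrange inversion to $D_{a,b}=P_{a,b}^{-1}$ and then expand $(1-aw-bw^2)^{-m}$ as a double binomial sum to read off the coefficient. The only cosmetic difference is that the paper uses the derivative form of Lagrange inversion with exponent $n$ (obtaining $c_{n-1}$ and then reindexing), whereas you use the coefficient-extraction form with exponent $n+1$ directly; your version is marginally tidier.
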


\begin{proof}
By the Lagrange inversion theorem:
\[
D_{a,b}(z)=
\sum_{n=1}^{\infty}\dfrac{d^{n-1}}{dw^{n-1}}\left.\left(\dfrac{w}{w-aw^2-bw^3}\right)^n\right|_{w=0}\dfrac{z^n}{n!}.
\]
Since $\binom{-n}{k}(-1)^k=\binom{n+k-1}{k}$, we have
\begin{align*}
\left(\dfrac{w}{w-aw^2-bw^3}\right)^n&=(1-aw-bw^2)^{-n}\\
&=\sum_{k=0}^{\infty}{n+k-1\choose k}(aw+bw^2)^{k}\\
&=\sum_{k=0}^{\infty}{n+k-1\choose k}
\sum_{j=0}^{k}{k\choose j}a^{k-j}b^jw^{k+j}\\
&=\sum_{m=0}^{\infty}w^{m}
\sum_{j=0}^{\lfloor{m/2}\rfloor}{n+m-j-1\choose m-j}{m-j\choose j}a^{m-2j}b^{j}.
\end{align*}

The coefficient of $D_{a,b}(z)$ at $w^{n}$ is $c_{n-1}(a,b)$, therefore
\[
c_{n-1}(a,b)
=\dfrac{1}{n}
\sum_{k=0}^{\lfloor(n-1)/2\rfloor}{2n-2-j\choose n-1-j}{n-1-j\choose j}a^{n-1-2j}b^{j},
\]
which leads to (\ref{r3formulac}).
\end{proof}

\subsection{Basic equations}

First we will describe solutions $C,D$ of the equations
\begin{align}
C&=1+azC^2+bz^2C^3,\label{absequationforc}\\
D&=z+aD^2+bD^3.\label{absequationford}
\end{align}
We will assume that $a,b\in\mathbb{R}$, $b\ne0$.
Clearly, if $z\ne0$ then $C$ is a solution of (\ref{absequationforc}) if and only if
$D=zC$ is a solution of (\ref{absequationford}).

\begin{lemma}\label{lemmamultiplesolution}
Assume that $a^2+4b\ne0$ and $a^2+3b\ne0$. Then for
\[
z=z_{\pm}(a,b):=\frac{-2a^3-9ab\pm2(a^2+3b)^{3/2}}{27b^2}
\]
each equation (\ref{absequationforc}), (\ref{absequationford}) has two solutions:
\begin{align*}
C=c_{\pm}(a,b)&:=\frac{a^2+6b\pm a\sqrt{a^2+3b}}{a^2+4b},&
C&=\frac{-2a^2-3b\mp2a\sqrt{a^2+3b}}{b},\\
D=d_{\pm}(a,b)&:=\frac{-a\pm \sqrt{a^2+3b}}{3b},&
D&=\frac{-a\mp2\sqrt{a^2+3b}}{3b},
\end{align*}
the former with multiplicity two.

Moreover, equations (\ref{absequationforc}), (\ref{absequationford}) admit a multiple solution $C,D$ if and only if $z=z_{\pm}(a,b)$.
Consequently, $\mathcal{N}_{a,b}=\{z_{-}(a,b),z_{+}(a,b)\}$.
\end{lemma}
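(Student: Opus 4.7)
The strategy is to combine the earlier proposition characterising $\mathcal{N}_{\mathbf{a}}$ as the image of the critical set of $P_{\mathbf{a}}$ with an explicit cubic calculation. First, I compute $P_{a,b}'(w)=1-2aw-3bw^2$; by the quadratic formula its two roots, which are distinct because $a^2+3b\ne0$, are precisely $d_\pm(a,b)=(-a\pm\sqrt{a^2+3b})/(3b)$. Substituting $w=d_\pm$ into $P_{a,b}(w)=w-aw^2-bw^3$ and using the critical-point identity $3bw^2=1-2aw$ to replace $bw^3$ by $w(1-2aw)/3$ reduces $P_{a,b}$ to a linear expression in $w$; a short computation then gives $P_{a,b}(d_\pm)=z_\pm(a,b)$, confirming the two displayed values.

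Next I would identify the remaining solution of~(\ref{absequationford}) at $z=z_\pm$. Writing that equation as the monic cubic
\[
w^{3}+\frac{a}{b}w^{2}-\frac{1}{b}w+\frac{z}{b}=0,
\]
Vieta's formulas give root sum $-a/b$; since $d_\pm$ is a double root, the third root equals $-a/b-2d_\pm=(-a\mp 2\sqrt{a^2+3b})/(3b)$, matching the second column of the $D$-table. The corresponding values of $C$ follow from $C=D/z$, which is legitimate exactly when $z_\pm\ne0$, and a direct evaluation shows that $z_\pm=0$ iff $a^2+4b=0$; this is precisely where the second hypothesis enters. Converting $d_\pm/z_\pm$ into the stated closed form is the main computational step: the cleanest route is to rationalise by the conjugate surd $a^2+6b\mp a\sqrt{a^2+3b}$, whereupon the denominator collapses to $9b(a^2+4b)$ and the quotient reduces to $(a^2+6b\pm a\sqrt{a^2+3b})/(a^2+4b)$. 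The second value of $C$ is treated analogously, using the formula for the third root of the $D$-cubic together with $C=D/z$.

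Finally, the ``moreover'' assertion and the formula $\mathcal{N}_{a,b}=\{z_-,z_+\}$ are immediate from the earlier proposition: equation~(\ref{absequationford}) admits a multiple solution iff $z=P_{a,b}(w_0)$ for some critical point $w_0$ of $P_{a,b}$, and under $a^2+3b\ne0$ that critical set is exactly $\{d_-,d_+\}$. The same statement for~(\ref{absequationforc}) follows from the bijection $C\leftrightarrow D=zC$ valid for $z\ne0$, after noting once more that the excluded pairs $(a,b)$ with $a^2+4b=0$ are precisely those where one of $z_\pm$ vanishes. The main obstacle I anticipate is the algebraic simplification of $d_\pm/z_\pm$ into the stated symmetric form; everything else is Vieta's formulas and the structural results already established.
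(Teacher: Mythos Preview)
Your proposal is correct and follows exactly the same route as the paper: identify the critical points of $P_{a,b}$ via the quadratic formula, evaluate $P_{a,b}$ there to obtain $z_\pm$, and invoke the earlier proposition characterising $\mathcal{N}_{\mathbf{a}}$ as the image of the critical set. The paper's proof is a two-line ``one can check'' sketch that states only these two facts; you have supplied the computations it omits (the Vieta argument for the third root, the passage $C=D/z$, the role of the hypothesis $a^2+4b\ne0$ in ensuring $z_\pm\ne0$, and the rationalisation yielding the closed form for $c_\pm$), but the underlying strategy is identical.
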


Note that
\[
z_+(a,b)\cdot z_-(a,b)=-\frac{a^2+4b}{27b^2}.
\]
If $a^2+4b=0$ then $z_-(a,b)=0$ and $z_+(a,b)=8/(27a)$.

\begin{proof}
One can check that the roots of $P_{a,b}'(w)=1-2aw-3bw^2$ are $d_{\pm}(a,b)$
and $P_{a,b}\left(d_{\pm}(a,b)\right)=z_{\pm}(a,b)$.
\end{proof}

It turns out that the solutions of (\ref{absequationford}) admit nice parametrization.

\begin{theorem}\label{theoremsolutionswitht}
For
\begin{align*}
z(a,b,t):=&\frac{-2a^3-9ab+t(3-t^2)(a^2+3b)^{3/2}}{27b^2},\\
\intertext{
$t\in\mathbb{C}$, the equation (\ref{absequationford}) has solutions}
D_0(a,b,t):=&\frac{-a+t\sqrt{a^2+3b}}{3b},\\
D_{\pm}(a,b,t):=&\frac{-2a-t\sqrt{a^2+3b}\pm\sqrt{3(4-t^2)(a^2+3b)}}{6b}.
\end{align*}
\end{theorem}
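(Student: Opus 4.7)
The plan is to reduce the cubic $bD^3+aD^2-D+z=0$ to a depressed form in which the parameter $t$ appears naturally, and then factor. Set $s:=\sqrt{a^2+3b}$ (with either choice of square root; both branches give the same set of roots as $t$ varies). The substitution
\[
D=\frac{-a+ws}{3b}
\]
combines the usual shift $D\mapsto D+a/(3b)$ that kills the quadratic term with a scaling by $s/(3b)$ chosen to normalize the linear coefficient. After a direct expansion, equation (\ref{absequationford}) becomes
\[
w^{3}-3w+\frac{2a^{3}+9ab+27b^{2}z}{s^{3}}=0.
\]

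Now I would plug in the specific value $z=z(a,b,t)$ from the statement. By construction,
\[
2a^{3}+9ab+27b^{2}z(a,b,t)=t(3-t^{2})s^{3},
\]
so the depressed cubic simplifies to $w^{3}-3w-(t^{3}-3t)=0$. This polynomial obviously has $w=t$ as a root, so it factors as
\[
w^{3}-3w-(t^{3}-3t)=(w-t)\bigl(w^{2}+tw+(t^{2}-3)\bigr),
\]
and the quadratic factor yields the remaining roots $w=\bigl(-t\pm\sqrt{3(4-t^{2})}\bigr)/2$ by the quadratic formula.

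Translating back via $D=(-a+ws)/(3b)$, the root $w=t$ gives $D_{0}(a,b,t)=(-a+ts)/(3b)$, and the two quadratic roots give
\[
D_{\pm}(a,b,t)=\frac{-2a-ts\pm\sqrt{3(4-t^{2})(a^{2}+3b)}}{6b},
\]
which are exactly the formulas in the theorem. The only step that requires some care is the bookkeeping in the reduction to depressed form — getting the constant term $(2a^{3}+9ab+27b^{2}z)/s^{3}$ correct — and a convenient shortcut, if one prefers to avoid the expansion, is to verify instead the three Vieta relations $D_{0}+D_{+}+D_{-}=-a/b$, $D_{0}D_{+}+D_{0}D_{-}+D_{+}D_{-}=-1/b$, and $D_{0}D_{+}D_{-}=-z(a,b,t)/b$ by direct algebra, using $s^{2}=a^{2}+3b$ at each stage; these three identities are equivalent to the claim.
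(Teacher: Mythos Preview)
Your proof is correct, but the route differs from the paper's. The paper simply verifies the factorization: it expands $(D-D_{+})(D-D_{-})$ to an explicit quadratic in $D$, then checks that $b(D-D_{+})(D-D_{-})(D-D_{0})=bD^{3}+aD^{2}-D+z(a,b,t)$. That is a pure ``plug in and confirm'' argument; it gives no hint of where the formulas come from.

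Your approach instead \emph{derives} the roots. The affine change of variable $D=(-a+ws)/(3b)$ with $s=\sqrt{a^{2}+3b}$ reduces the cubic to the depressed, normalized form $w^{3}-3w=t^{3}-3t$, which factors on sight as $(w-t)(w^{2}+tw+t^{2}-3)$. This explains the parametrization: the map $t\mapsto t(3-t^{2})$ is precisely the one that linearizes the constant term after the depression, and the triple-angle structure (visible if one sets $w=2\cos\theta$) is why the three branches have such clean closed forms. Your computation of the constant term $(2a^{3}+9ab+27b^{2}z)/s^{3}$ is correct, and the Vieta check you mention at the end is essentially what the paper does, so you have subsumed their argument as a shortcut within yours.
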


The square root $\sqrt{a^2+3b}$ must be the same
for $z(a,b,t)$, $D_0(a,b,t)$ and $D_{\pm}(a,b,t)$
and the square root $\sqrt{4-t^2}$ must be the same for $D_{\pm}(a,b,t)$.
Note that for $t_0:=a/\sqrt{a^2+3b}$ we have
\begin{equation}
z(a,b,t_0)=0,\quad
D_0\left(a,b,t_0\right)=0,\quad
D_{\pm}\left(a,b,t_0\right)=\frac{-a\pm\sqrt{a^2+4b}}{2b}.
\end{equation}

\begin{proof}
Putting $D_{\pm}:=D_{\pm}(a,b,t)$, $D_{0}:=D_{0}(a,b,t)$,
it is elementary to check that
\begin{multline*}
(D-D_+)(D-D_-)\\
=D^2+\frac{2a+t\sqrt{a^2+3b}}{3b}D+\frac{a^2+(t^2-3)(a^2+3b)+at\sqrt{a^2+3b}}{9b^2}.
\end{multline*}
Consequently,
\[
b(D-D_+)(D-D_-)(D-D_0)=b D^3+aD^2-D+z(a,b,t),
\]
which completes the proof.
\end{proof}

One can check that $D_{+}(a,b,t)=D_{-}(a,b,t)$ iff $t=\pm2$,
$D_{+}(a,b,t)=D_{0}(a,b,t)$ iff $t=1$ and
$D_{-}(a,b,t)=D_{0}(a,b,t)$ iff $t=-1$.

\subsection{The generating functions}

Now we will study the generating functions $C_{a,b}(z)$, $D_{a,b}(z)$.
Recall that they satisfy
\begin{align}
C_{a,b}(z)&=1+a z C_{a,b}(z)^2+b z^2 C_{a,b}(z)^3,\label{equationforc}\\
D_{a,b}(z)&=z+a D_{a,b}(z)^2+b D_{a,b}(z)^3,\label{equationford}
\end{align}
with $D(0)=0$. These functions
can be defined on $\mathcal{D}_{a,b}:=\mathbb{C}\setminus\left\{t z_{\pm}(a,b):t\ge1\right\}$.
If $a^2+4b=0$, $a>0$ (resp. $a<0$) then we can put $\mathcal{D}_{a,b}:=\mathbb{C}\setminus[8/(27a),+\infty)$
(resp. $\mathcal{D}_{a,b}:=\mathbb{C}\setminus(-\infty,8/(27a]$).

\begin{lemma}
For the real function $P_{a,b}(d)=d-ad^2-b d^3$ we have:
\begin{itemize}
\item If $a^2+3b\le0$ then $P_{a,b}(d)$ is an increasing function on $\mathbb{R}$.
\item If $b<0<a^2+3b$, $a>0$ then $0<d_{+}(a,b)<d_{-}(a,b)$, $P_{a,b}$ is increasing
on $(-\infty,d_{+}(a,b)]$, decreasing on $[d_{+}(a,b),d_{-}(a,b)]$ and
increasing on $[d_{-},+\infty)$.
\item If $b>0$, $a>0$ then $d_{-}(a,b)<0<d_{+}(a,b)$,
$P_{a,b}$ is decreasing on $(-\infty,d_{-}(a,b)]$, increasing on $[d_{-}(a,b),d_{+}(a,b)]$
and decreasing on $[d_{+}(a,b),+\infty)$.
\end{itemize}
\end{lemma}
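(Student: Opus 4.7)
The plan is to analyze the derivative $P_{a,b}'(d)=1-2ad-3bd^2$, viewed as a quadratic in $d$ with leading coefficient $-3b$ and discriminant $4(a^2+3b)$. All three cases then follow from standard sign analysis, with the roots being exactly $d_{\pm}(a,b)=\frac{-a\pm\sqrt{a^2+3b}}{3b}$, as already recorded in Lemma~\ref{lemmamultiplesolution}.

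First I would dispose of the case $a^2+3b\le 0$. Since $b\ne 0$, the hypothesis forces $b<0$ (otherwise $a^2+3b>0$), so the leading coefficient $-3b$ is positive, while the discriminant $4(a^2+3b)$ is nonpositive. Hence $P_{a,b}'(d)\ge 0$ for all $d\in\mathbb{R}$, vanishing at most at one point, so $P_{a,b}$ is strictly increasing on $\mathbb{R}$.

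Next I would treat the case $b<0<a^2+3b$ and $a>0$. Here $-3b>0$, so the parabola $P_{a,b}'$ opens upward and has two distinct real roots $d_{\pm}(a,b)$. To see that $0<d_+(a,b)<d_-(a,b)$, note that $b<0$ gives $\sqrt{a^2+3b}<a$, so both numerators $-a\pm\sqrt{a^2+3b}$ are negative, and divided by $3b<0$ both $d_{\pm}(a,b)$ are positive; moreover $d_-(a,b)-d_+(a,b)=-2\sqrt{a^2+3b}/(3b)>0$. Because the parabola opens upward, $P_{a,b}'$ is positive off $[d_+,d_-]$ and negative inside, which yields exactly the claimed monotonicity pattern.

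Finally, for $b>0$ and $a>0$ the leading coefficient $-3b$ is negative, so $P_{a,b}'$ opens downward. Here $\sqrt{a^2+3b}>a$, so $-a+\sqrt{a^2+3b}>0$ and $-a-\sqrt{a^2+3b}<0$; dividing by $3b>0$ gives $d_-(a,b)<0<d_+(a,b)$. Since the parabola opens downward, $P_{a,b}'$ is positive on $(d_-,d_+)$ and negative outside, giving the stated monotonicity intervals. No step is genuinely an obstacle; the only care needed is tracking the sign of $3b$ when dividing and when reading off the sign of $P_{a,b}'$ from its leading coefficient.
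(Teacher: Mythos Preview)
Your argument is correct and is exactly the natural derivative analysis one would expect: compute $P_{a,b}'(d)=1-2ad-3bd^2$, read off the sign from the leading coefficient $-3b$ and the discriminant $4(a^2+3b)$, and identify the roots with $d_{\pm}(a,b)$ from Lemma~\ref{lemmamultiplesolution}. The paper itself states this lemma without proof, treating it as elementary, so there is nothing to compare against beyond noting that your sign bookkeeping (especially the division by $3b$ when $b<0$) is done carefully and correctly.
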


Denote $\tau_{\pm}(a,b):=1/z_{\pm}(a,b)$, so that
\[
\tau_{\pm}(a,b):=\frac{2a^3+9ab\pm 2(a^2+3b)^{3/2}}{a^2+4b},
\]
with particular cases $\tau_{+}(a,-a^2/3)=3a$, $\tau_{+}(a,-a^2/4)=27a/8$.
Note that for fixed $a>0$ the function $\tau_{+}(a,b)$ increases with $b\in[-a^2/3,+\infty)$
and $\tau_{-}(a,b)$ decreases with $b\in[0,+\infty)$.

\begin{theorem}\label{theoremr3positivedefinite}
The sequence $\left\{c_n(a,b)\right\}_{n=0}^{\infty}$ is positive definite
if and only if $a^2+3b\ge0$.
If $a>0$ then for the corresponding probability distribution  $\mu(a,b)$ we have:
\begin{itemize}
\item If $b<0\le a^2+3b$ then the support of $\mu(a,b)$ is contained in $[0,\tau_{+}(a,b)]$.
\item If $b>0$ then the support of $\mu(a,b)$ is contained in $[\tau_{-}(a,b),\tau_{+}(a,b)]$.
\end{itemize}
\end{theorem}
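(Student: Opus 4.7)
The plan is to establish the biconditional first, and then determine the support in each sign regime of $b$.

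For sufficiency of $a^2+3b\ge 0$, I would invoke Corollary~\ref{corollarypositivedefinite}: the two roots of $P_{a,b}'(w)=1-2aw-3bw^2$ are $d_\pm(a,b)=(-a\pm\sqrt{a^2+3b})/(3b)$, and these are real precisely when $a^2+3b\ge 0$, which immediately forces positive definiteness.

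For necessity I would argue by contradiction, splitting on $a$. If $a=0$, then $a^2+3b<0$ gives $b<0$, and the recurrence yields $c_0=1$, $c_1=0$, $c_2=b<0$, so the Hankel minor $c_0c_2-c_1^2=b$ is negative and the sequence cannot be positive definite. If $a\ne 0$ and $a^2+3b<0$, then by Lemma~\ref{lemmamultiplesolution} the set $\mathcal{N}_{a,b}=\{z_\pm(a,b)\}$ consists of a pair of non-real complex conjugates, so $\mathcal{D}_{a,b}$ contains a neighborhood of the whole real line and $D_{a,b}$ is analytic there. Assuming $c_n(a,b)$ were positive definite with representing measure $\mu$, the Laurent expansions of $G_\mu(z)$ and $D_{a,b}(1/z)$ near $z=\infty$ coincide, and analytic continuation along paths inside the connected complement of the two non-real singular arcs then yields $G_\mu(z)=D_{a,b}(1/z)$ on all of $\mathbb{R}\setminus\{0\}$. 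Stieltjes inversion forces $\operatorname{supp}\mu\subseteq\{0\}$, so $\mu=\delta_0$, contradicting $c_1=a\ne 0$.

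For the support description when $a>0$, I would carry out a branch-cut analysis of $D_{a,b}$ guided by the preceding monotonicity lemma. In the case $b<0\le a^2+3b$ one has $0<d_+(a,b)<d_-(a,b)$ with $P_{a,b}$ increasing--decreasing--increasing on $\mathbb{R}$; the branch of $P_{a,b}^{-1}$ through the origin is the restriction to the left increasing interval $(-\infty,d_+]$, so it merges with the adjacent branch only at $z=z_+(a,b)$ while remaining analytic at $z_-(a,b)$ (where the other two sheets coalesce). Choosing the branch cut to be $[z_+(a,b),+\infty)$, the substitution $z\mapsto 1/z$ gives that $D_{a,b}(1/z)$ is analytic off $(0,\tau_+(a,b)]$, whence $\operatorname{supp}\mu(a,b)\subseteq[0,\tau_+(a,b)]$. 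In the case $b>0$ one has $d_-(a,b)<0<d_+(a,b)$ with $P_{a,b}$ decreasing--increasing--decreasing; the branch through the origin lives on the middle increasing interval and therefore merges with an adjacent branch at \textbf{both} $z_-(a,b)$ and $z_+(a,b)$, so $D_{a,b}$ requires two cuts $(-\infty,z_-(a,b)]$ and $[z_+(a,b),+\infty)$, and the same inversion yields $\operatorname{supp}\mu(a,b)\subseteq[\tau_-(a,b),\tau_+(a,b)]$.

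The main obstacle I anticipate is the branch-cut bookkeeping: one must identify, for each of the two critical values $z_\pm$, precisely which pair among the three local inverses of $P_{a,b}$ coalesces there, verify whether or not the branch singled out by $D_{a,b}(0)=0$ is among them, and then check that the resulting cuts can be routed along the real axis outward without ever crossing the arcs that must be deleted to define $\mathcal{D}_{a,b}$. Once this monodromy picture is in place, the Stieltjes inversion step and the elementary Hankel calculation for $a=0$ are routine.
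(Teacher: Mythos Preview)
Your argument is correct and tracks the paper closely. Sufficiency via Corollary~\ref{corollarypositivedefinite} is exactly the paper's route (through Lemma~\ref{lemmamultiplesolution} and Theorem~\ref{theorempositivedefinite}), and your support analysis is a more explicit version of what the paper states in one line: you actually carry out the branch identification showing that for $b<0$ the sheet through $0$ meets only the $z_+$ critical value, while for $b>0$ it meets both, which justifies the extensions of $D_{a,b}$ to $(-\infty,z_+)$ and $(z_-,z_+)$ respectively.

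For necessity the paper takes the dual viewpoint: it asserts that when $a^2+3b<0$ the function $D_{a,b}$ is \emph{not} analytic at the non-real points $z_\pm(a,b)$, hence cannot coincide with a Pick function $D_\mu$. You instead use that $D_{a,b}$ \emph{is} analytic on a neighborhood of $\mathbb{R}$, and push this through Stieltjes inversion to force $\mu=\delta_0$. Both are valid; your version sidesteps having to check that the particular branch through $0$ genuinely ramifies at $z_\pm$. One small clarification you should add: in the continuation step the path from $\infty$ to $x\in\mathbb{R}\setminus\{0\}$ must be chosen inside $\mathbb{C}^+$ (or $\mathbb{C}^-$) so that $G_\mu$, not just $D_{a,b}(1/z)$, stays analytic along it; since $\mathbb{C}^+$ minus the single arc is connected and the arc touches $\mathbb{R}$ only at $0$, this works. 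The separate Hankel treatment of $a=0$ is harmless but unnecessary: your analytic argument already gives $\mu=\delta_0$ there, contradicting $c_2=b\ne 0$.
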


\begin{proof}
If $a^2+3b\ge0$ then the sequence $\left\{c_n(a,b)\right\}_{n=0}^{\infty}$ is positive definite
by Theorem~\ref{theorempositivedefinite} and Lemma~\ref{lemmamultiplesolution}
and $D_{a,b}(z)$ is a Pick function on $\mathbb{C}\setminus\mathbb{R}$,
which can be extended to $(-\infty,z_{+}(a,b))$ if $b<0\le a^2+3b$
and to $\big(z_{-}(a,b),z_{+}(a,b)\big)$ if $b>0$.

If $a^2+3b<0$ then $D_{a,b}(z)$ is not analytic at $z_{\pm}(a,b)\notin\mathbb{R}$,
so can not be extended to a Pick function.
\end{proof}

\subsection{The generating functions in a neighborhood of $0$}

Applying Theorem~\ref{theoremsolutionswitht} we can write down explicit formulas.

\begin{theorem}
Assume that $a>0$, $0\ne b\in\mathbb{R}$. Then in a neighborhood of $z=0$ we have
\begin{align}
D_{a,b}(z)&=\frac{\phi(g(z))\sqrt{|a^2+3b|}-a}{3b},\label{formuladr3expicite}\\
C_{a,b}(z)&=\frac{\phi(g(z))\sqrt{|a^2+3b|}-a}{3bz},\label{formulacr3expicite}
\end{align}
where
\[
g(z):=\frac{27b^2z+2a^3+9ab}{|a^2+3b|^{3/2}}
\]
and
\[
\phi(u):=\left\{
\begin{array}{ll}
-2\sinh\left(\frac{1}{3}\mathrm{arcsinh}(u /2)\right)&\hbox{if $a^2+3b<0$,}\\
2\cosh\left(\frac{1}{3}\mathrm{arcosh}(-u/2)\right)&\hbox{if $b<0<a^2+3b$,}\\
2\sin\left(\frac{1}{3}\arcsin(u /2)\right)&\hbox{if $b>0$.}
\end{array}
\right.
\]
\end{theorem}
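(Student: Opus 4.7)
The plan is to use the parametric description from Theorem~\ref{theoremsolutionswitht}: every solution of equation (\ref{absequationford}) is of the form $D_0(a,b,t), D_\pm(a,b,t)$ for some $t\in\mathbb{C}$, with $z=z(a,b,t)$. Since $D_{a,b}(0)=0$, we must have $D_{a,b}(z)=D_0(a,b,t)$ where $t$ is determined by $z(a,b,t)=z$ and is the branch passing through $t_0=a/\sqrt{a^2+3b}$ at $z=0$ (by the remark following Theorem~\ref{theoremsolutionswitht}). So the task reduces to inverting the relation $z=z(a,b,t)$ for $t$ and plugging into $D_0$; the formula for $C_{a,b}(z)$ is then just $D_{a,b}(z)/z$.

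For the inversion, I would rewrite $z=z(a,b,t)$ as $t(3-t^2)(a^2+3b)^{3/2}=27b^2z+2a^3+9ab$, i.e.\ a depressed cubic in $t$. When $a^2+3b>0$, this becomes $t(3-t^2)=g(z)$, solvable by the classical trigonometric/hyperbolic substitutions. If $b>0$, set $t=2\sin\theta$ and use the identity $2\sin\theta(3-4\sin^2\theta)=2\sin(3\theta)$ to obtain $\sin(3\theta)=g(z)/2$, giving $t=2\sin\bigl(\tfrac13\arcsin(g(z)/2)\bigr)=\phi(g(z))$. If $b<0<a^2+3b$, set $t=2\cosh\theta$ and use $2\cosh\theta(3-4\cosh^2\theta)=-2\cosh(3\theta)$ to get $\cosh(3\theta)=-g(z)/2$, yielding $t=\phi(g(z))$ with the cosh formula. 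Substituting into $D_0$ and simplifying $\sqrt{a^2+3b}=\sqrt{|a^2+3b|}$ produces~(\ref{formuladr3expicite}).

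The genuinely subtle case is $a^2+3b<0$, since then $\sqrt{a^2+3b}=i\sqrt{|a^2+3b|}$ is imaginary and $(a^2+3b)^{3/2}=-i|a^2+3b|^{3/2}$, while the final formula must be real. I would handle it by writing $t=is$ with $s\in\mathbb{R}$; the equation $t(3-t^2)(a^2+3b)^{3/2}=27b^2z+2a^3+9ab$ then becomes $s^3+3s=g(z)$, solved via $s=2\sinh\theta$ and the identity $\sinh(3\theta)=4\sinh^3\theta+3\sinh\theta$, producing $s=2\sinh\bigl(\tfrac13\mathrm{arcsinh}(g(z)/2)\bigr)$. The two imaginary factors combine: $t\sqrt{a^2+3b}=2is\cdot i\sqrt{|a^2+3b|}=-2s\sqrt{|a^2+3b|}$, and this is exactly $\phi(g(z))\sqrt{|a^2+3b|}$ with the definition $\phi(u)=-2\sinh(\tfrac13\mathrm{arcsinh}(u/2))$ given in the statement.

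The remaining check is the branch selection. Since both sides of (\ref{formuladr3expicite}) are analytic in a neighborhood of $z=0$, it suffices to verify the identity at $z=0$ using the principal branches of $\arcsin,\mathrm{arcosh},\mathrm{arcsinh}$. In each case this amounts to checking that the value $\theta_0:=\tfrac13(\arcsin,\mathrm{arcosh},\mathrm{arcsinh})(g(0)/2)$ satisfies $2(\sin,\cosh,\sinh)\theta_0=t_0$ up to the appropriate sign, which is a direct computation using the triple-angle identity (for example, in the $b>0$ case, $\sin\theta_0=a/(2\sqrt{a^2+3b})\in(-1/2,1/2)$ lies in the range of $\arcsin/3$). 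I expect the main obstacle to be precisely this bookkeeping in the $a^2+3b<0$ case, where one must track two intertwined sign conventions ($\sqrt{a^2+3b}=i\sqrt{|a^2+3b|}$ and the substitution $t=is$) and confirm that they conspire to land on the principal real branch of $\mathrm{arcsinh}$.
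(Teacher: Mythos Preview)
Your proposal is correct and follows essentially the same route as the paper: parametrize via Theorem~\ref{theoremsolutionswitht}, invert $u=t(3-t^2)$ (or $u=s(3+s^2)$ after $t=is$ when $a^2+3b<0$) by the triple-angle identities, and plug back into $D_0$. The paper phrases the $a^2+3b<0$ case as evaluating $z(a,b,t\mathrm{i})$ and $D_0(a,b,t\mathrm{i})$ directly, which is exactly your substitution $t=is$; and for branch selection the paper appeals to monotonicity of $t\mapsto t(3-t^2)$ on the relevant interval containing $t_0$, whereas you check the value at $z=0$ and invoke analyticity --- equivalent arguments. One small remark: in the case $b<0<a^2+3b$ you may have $1<t_0<2$, so $\theta_0$ with $\cosh\theta_0=t_0/2$ is purely imaginary and $\mathrm{arcosh}(-g(0)/2)$ must be read as an analytic continuation (it becomes $2\cos\bigl(\tfrac13\arccos(-u/2)\bigr)$ there); the paper glosses over this too, but it is worth being aware that the ``bookkeeping'' you flag for $a^2+3b<0$ also arises here.
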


\begin{proof}
If $a^2+3b<0$ then
the maps
\begin{align*}
t\mapsto z(a,b,t\mathrm{i})&=\frac{-2a^3-9ab+t(3+t^2)\left|a^2+3b\right|^{3/2}}{27b^2},\\
t\mapsto D_0(a,b,t\mathrm{i})&=\frac{-a- t\sqrt{\left|a^2+3b\right|}}{3b}
\end{align*}
are real and increasing with $t\in\mathbb{R}$.
For $u=t(3+t^2)$ we have $t=2\sinh\left(\frac{1}{3}\mathrm{arcsinh}(u /2)\right)$, $t,u\in\mathbb{R}$,
which leads to (\ref{formuladr3expicite}) in this case.

If $b<0<a^2+3b$ then $t_0=a/\sqrt{a^2+3b}>1$, the functions $t\mapsto t(3-t^2)$, $t\mapsto z(a,b,t)$, $t\mapsto D_0(a,b,t)$
are decreasing for $t\ge1$ and the inverse function for the map $t\mapsto u=t(3-t^2)$, $t\ge1$, is
$t=2\cosh\left(\frac{1}{3}\mathrm{arcosh}(-u/2)\right)$, $u\le2$.

Finally, if $b>0$ then $0<t_0=a/\sqrt{a^2+3b}<1$ and for  $t\in[-1,1]$ the inverse of $u=t(3-t^2)$ is
$t=2\sin\left(\frac{1}{3}\arcsin(u /2)\right)$,  $u\in[-2,2]$, which concludes the proof.
\end{proof}

\subsection{Free cumulants}

We will use the following elementary

\begin{proposition}
For $a,b\in\mathbb{R}$ we have
\[
\frac{1}{1-az-bz^2}=\sum_{n=0}^{\infty}\kappa_n(a,b)z^n,
\]
where the coefficients $\kappa_n(a,b)$
are given by
\begin{equation}\label{kappar3}
\kappa_{n}(a,b)=\sum_{k=0}^{\lfloor n/2\rfloor}\binom{n-k}{k}a^{n-2k}b^{k}
\end{equation}
and satisfy the following recurrence: $\kappa_0(a,b)=1$,
$\kappa_1(a,b)=a$ and
\[
\kappa_{n}(a,b)=a\cdot \kappa_{n-1}(a,b)+b\cdot \kappa_{n-2}(a,b)
\]
for $n\ge2$.
\end{proposition}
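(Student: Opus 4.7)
The plan is to verify the three claims in the natural order: the generating function identity, the closed form for $\kappa_n(a,b)$, and the linear recurrence.

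First I would expand the left-hand side geometrically: for $|z|$ sufficiently small,
\[
\frac{1}{1-az-bz^2}=\sum_{m=0}^{\infty}(az+bz^2)^m.
\]
Applying the binomial theorem inside each summand turns this into a double sum $\sum_{m=0}^{\infty}\sum_{k=0}^{m}\binom{m}{k}a^{m-k}b^{k}z^{m+k}$. Reindexing by $n:=m+k$ (so that $m=n-k$ and the constraint $k\le m$ becomes $k\le\lfloor n/2\rfloor$) collects the coefficient of $z^n$ into exactly the right-hand side of (\ref{kappar3}). This step establishes simultaneously the series identity and the closed formula.

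For the recurrence, the shortest route is to multiply the generating function identity by $1-az-bz^2$ and compare coefficients of $z^n$ on both sides. The $z^0$ coefficient forces $\kappa_0(a,b)=1$; the $z^1$ coefficient forces $\kappa_1(a,b)=a$; and for $n\ge2$, the vanishing of the coefficient of $z^n$ in $(1-az-bz^2)\sum_n\kappa_n(a,b)z^n=1$ reads as $\kappa_n(a,b)-a\kappa_{n-1}(a,b)-b\kappa_{n-2}(a,b)=0$, which is the claimed recurrence.

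I do not anticipate any real obstacle, as each step reduces to an elementary manipulation; the main choice is simply between routing through generating functions or through the explicit sum. If one preferred to avoid generating functions in the second half of the argument, the recurrence can instead be checked directly from (\ref{kappar3}) via Pascal's rule $\binom{n-k}{k}=\binom{n-1-k}{k}+\binom{n-1-k}{k-1}$, after which uniqueness of the solution to a two-term linear recurrence with the given initial data would close the argument. I would choose the generating function route since it reuses the identity we are already proving.
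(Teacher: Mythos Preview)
Your argument is correct in every step: the geometric series expansion followed by the binomial theorem and the reindexing $n=m+k$ cleanly produces formula~(\ref{kappar3}), and multiplying through by $1-az-bz^2$ and matching coefficients gives the recurrence with the stated initial values.

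As for comparison with the paper: there is nothing to compare, since the paper states this proposition as ``elementary'' and gives no proof at all. Your write-up supplies exactly the routine verification the authors chose to omit, and the alternative route you mention (Pascal's rule applied directly to the sum) would work equally well.
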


\begin{corollary}
The free cumulants $\kappa_n(a,b)$, $n\ge1$, of the sequence $c_n(a,b)$ are given by (\ref{kappar3}).
If $a^2+4b>0$ then we have the following version of Binet's formula:
\begin{equation}\label{freecumulantsr3}
\kappa_n(a,b)=t_{-}u_{-}^n+t_{+}u_{+}^n
\end{equation}
where
\[
t_{\mp}=\frac{\sqrt{a^2+4b}\mp a}{2\sqrt{a^2+4b}},\qquad\qquad
u_{\mp}=\frac{\mp 2b}{\sqrt{a^2+4b}\pm a}.
\]

If $b>0$ then the corresponding probability measure $\mu(a,b)$ is infinitely divisible
with respect to the free additive convolution~$\boxplus$.
\end{corollary}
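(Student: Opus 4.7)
My plan has three parts, matching the three assertions in the corollary. For the first claim I would combine the preceding proposition, which expresses $\kappa_n(a,b)$ as the Taylor coefficients of $(1-az-bz^2)^{-1}$, with Proposition~\ref{propositionrtransform}. Specialised to $r=3$ the latter gives
\[
R_{a,b}(z)=\frac{az+bz^2}{1-az-bz^2}=\frac{1}{1-az-bz^2}-1,
\]
whose Taylor coefficient at $z^n$ for $n\ge 1$ is exactly the number $\kappa_n(a,b)$ from~(\ref{kappar3}).

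For the Binet-type formula I would factor $1-az-bz^2=(1-u_+z)(1-u_-z)$, which is real because $a^2+4b>0$, and check by rationalisation that the roots $u_\pm=(a\pm\sqrt{a^2+4b})/2$ coincide with the $u_\pm$ in the statement. A partial-fraction decomposition
\[
\frac{1}{(1-u_+z)(1-u_-z)}=\frac{A_+}{1-u_+z}+\frac{A_-}{1-u_-z}
\]
produces constants $A_\pm$ that a short computation identifies with the stated $t_\pm$. Expanding each geometric series then yields $\kappa_n(a,b)=t_+u_+^n+t_-u_-^n$ for all $n\ge 0$.

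For free infinite divisibility when $b>0$: in this regime $a^2+4b>0$ automatically, $u_+u_-=-b<0$ forces $u_+>0>u_-$, and both $t_\pm>0$ (their numerators are positive). I would then invoke the Bercovici--Voiculescu free L\'evy--Khintchine theorem: a compactly supported probability measure $\mu$ on $\mathbb{R}$ is $\boxplus$-infinitely divisible iff there exist $\gamma\in\mathbb{R}$ and a finite positive Borel measure $\rho$ with
\[
\kappa_1(\mu)=\gamma,\qquad \kappa_n(\mu)=\int_{\mathbb{R}}x^{n-2}\,d\rho(x)\quad(n\ge 2).
\]
Taking $\gamma:=a$ and $\rho:=t_+u_+^2\,\delta_{u_+}+t_-u_-^2\,\delta_{u_-}$, the sign analysis makes $\rho$ a genuine finite positive measure, and one verifies directly that $\int x^{n-2}\,d\rho=t_+u_+^n+t_-u_-^n=\kappa_n(a,b)$ for $n\ge 2$ and that $t_+u_++t_-u_-=a=\gamma$.

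The main obstacle is Part~3; Parts~1 and~2 are routine generating-function algebra, but for the third assertion one must exhibit an explicit L\'evy measure and verify its positivity. The hypothesis $b>0$ enters precisely to guarantee the sign conditions $u_+>0>u_-$ and $t_\pm>0$ that make the candidate $\rho$ genuinely positive.
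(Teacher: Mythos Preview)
Your proposal is correct and follows essentially the same route as the paper: the identity $R_{a,b}(z)=\frac{1}{1-az-bz^2}-1$, a partial-fraction decomposition of $(1-az-bz^2)^{-1}$ over its real reciprocal roots $u_\pm$ when $a^2+4b>0$, and then the observation that $t_\pm>0$ for $b>0$ forces the shifted cumulant sequence $(\kappa_{n+2}(a,b))_{n\ge0}$ to be the moment sequence of a finite positive measure, which is the Nica--Speicher/Bercovici--Voiculescu criterion for $\boxplus$-infinite divisibility. The only cosmetic difference is that the paper phrases the decomposition as $R_{a,b}(z)=t_-\frac{u_-z}{1-u_-z}+t_+\frac{u_+z}{1-u_+z}$ and simply asserts positive definiteness of $\kappa_{n+2}$, whereas you write out the L\'evy measure $\rho=t_+u_+^2\delta_{u_+}+t_-u_-^2\delta_{u_-}$ explicitly.
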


\begin{proof}
For the free $R$-transform of the sequence $c_n(a,b)$ we have
\[
R_{a,b}(z)=\frac{a+bz}{1-az-bz^2}=\frac{1}{1-az-bz^2}-1,
\]
which proves that the free cumulants are given by (\ref{kappar3}).
Moreover, it is elementary to check that if $a^2+4b>0$ then
\begin{equation}\label{freerr3decomposition}
R_{a,b}(z)=t_{-}\frac{u_{-}z}{1-u_{-}z}+t_{+}\frac{u_{+}z}{1-u_{+}z}.
\end{equation}
If $b>0$ then $t_{-},t_{+}>0$, and from (\ref{freecumulantsr3})
the sequence $\kappa_{n+2}(a,b)$, $n\ge0$, is positive definite.
This implies that $\mu(a,b)$ is infinitely divisible
with respect to the free additive convolution~$\boxplus$, see~\cite{nicaspeicherlectures}.
\end{proof}

Formula (\ref{freerr3decomposition}) implies that for $b>0$ the distribution
$\mu(a,b)$ can be decomposed as
\[
\mu(a,b)=\mu(u_{-})^{\boxplus t_{-}}\boxplus\mu(u_{+})^{\boxplus t_{+}}.
\]
where $\mu(u)^{\boxplus t}$ is dilation of the Marchenko-Pastur distribution
$\mu(1)^{\boxplus t}$ with parameter $u$.

\textbf{Examples}.
One encounters interesting sequences as free cumulants $\kappa_n(a,b)$, notably
the Fibonacci sequence $A000045$ as $\kappa_n(1,1)$,
the Pell sequence $A000129$ as $\kappa_n(2,1)$,
the	Jacobsthal sequence $A001045$ as $\kappa_n(1,2)$,
also $\kappa_n(3,-1)=A001906$ (bisection of Fibonacci sequence),
$\kappa_n(4,-1)=A001353$, $\kappa_n(5,-1)=A004254$,
$\kappa_n(5,-2)=A107839$, $\kappa_n(3,-2)=A000225$,
$\kappa_n(1,3)=A006130$, $\kappa_n(1,4)=A006131$,
$\kappa_n(1,5)=A015440$.

\subsection{Special cases.}\label{subsectionr3specialcases}
Now we will focus on some special cases: $a=0$, $a^2+3b=0$ and $a^2+4b=0$.

1. For $a=0,b=\mp1$ formula (\ref{formulacr3expicite}) yields
\[
C_{0,-1}(z)=\frac{2\sinh\left(\frac{1}{3}\mathrm{arcsinh}\left(z\sqrt{27}/2\right)\right)}{z\sqrt{3}},
\]
\[
C_{0,1}(z)=\frac{2\sin\left(\frac{1}{3}\arcsin\left(z\sqrt{27}/2\right)\right)}{z\sqrt{3}}.
\]
Comparing (\ref{fussgeneratingformula}) and (\ref{equationforc}) one can see
that $C_{0,1}(z)=\mathcal{B}_{3}(z^2)$.
In fact $c_n(0,1)$ is the aerated sequence $\binom{3n+1}{n}\frac{1}{3n+1}$,
i.e. $c_{2n}(0,1)=\binom{3n+1}{n}\frac{1}{3n+1}$ and $c_{2n+1}(0,1)=0$,
while $c_{n}(0,-1)$ is the aerated sequence $\binom{3n+1}{n}\frac{(-1)^n}{3n+1}$.
By Theorem~\ref{theoremr3positivedefinite} the sequence $c_n(0,1)$ is positive definite, $c_n(0,-1)$ is not.
It is known that
\[
\binom{3n+1}{n}\frac{1}{3n+1}=\int_{0}^{27/4} x^n W_{3,1}(x)\,dx,
\]
$n=0,1,2,\ldots$, where
\[
W_{3,1}(x)=\frac{3\left(1+\sqrt{1-4x/27}\right)^{2/3}-2^{2/3}x^{1/3}}
{2^{4/3}3^{1/2}\pi x^{2/3}\left(1+\sqrt{1-4x/27}\right)^{1/3}},
\]
see \cite{pensonsolomon2002} and (41) in \cite{mlopezy2013}. This implies, that
\[
c_n(0,1)=\int_{-\sqrt{27}/2}^{\sqrt{27}/2} x^n W_{3,1}(x^2)|x|\,dx,
\]
$n=0,1,2,\ldots$, so that the distribution $\mu(0,1)$ is absolutely continuous,
with the density function $W_{3,1}(x^2)|x|$ on $(-\sqrt{27}/2,\sqrt{27}/2)$.
Moreover, in view of Corollary~5.3 in \cite{mlotkowski2010}, $\mu(0,1)$
is infinitely divisible with respect to the free convolution ``$\boxplus$",
which means that the sequence $c_n^{\boxplus t}(0,1)$ is positive definite for every $t>0$.
Moreover, since for $t>0$ the distribution $\mu(0,1)^{\boxplus t}$ is symmetric,
also the sequence $c_{2n}^{\boxplus t}(0,1)$ is positive definite and the support
of the corresponding probability distribution is contained in $[0,+\infty)$.

2. For $b=-a^2/3$ it is easy to check that
\[
D_{a,-a^2/3}(z)=\frac{1-(1-3az)^{1/3}}{a}
\]
and
\begin{equation}\label{formulac3minus3}
c_n(a,-a^2/3)=3\binom{1/3}{n+1}(-3a)^n=\frac{a^n}{(n+1)!}\prod_{i=1}^{n}(3i-1).
\end{equation}
In particular $c_n(3,-3)=A097188$ (see also $A025748$):
\[
1, 3, 15, 90, 594, 4158, 30294, 227205, 1741905, 13586859, 107459703,\ldots,
\]
while $\kappa_n(3,-3)=A057083$.
In view of Theorem~\ref{theoremr3positivedefinite} the sequence $c_n(a,-a^2/3)$
is positive definite. More precisely, if $a>0$ then
\begin{equation}
c_n(a,-a^2/3)=\int_{0}^{3a}x^n\frac{\sqrt{3}\left(3ax^2-x^3\right)^{1/3}}{2a\pi x}\,dx,
\end{equation}
$n=0,1,2,\ldots$.
Indeed, from the definition of the beta function the integral is equal to
\[
\frac{a^n 3^{n+3/2}}{2\pi}\frac{\Gamma(n+2/3)\Gamma(4/3)}{\Gamma(n+2)}.
\]
Using identities: $\Gamma(x+1)=x\Gamma(x)$ and $\Gamma(1/3)\Gamma(2/3)=2\pi/\sqrt{3}$
one can check that this coincides with the right hand side of (\ref{formulac3minus3}).

3. Put $b=-a^2/4$. Since
\[
1+azB^4-a^2z^2B^6/4-B^2=(1+B-azB^3/2)(1-B+azB^3/2)
\]
we see that $C_{a,-a^2/4}(z)$ can be represented as $B(z)^2$,
where $B(z)$ satisfies equation
\[
B(z)=1+ az B(z)^3/2.
\]
This means that
\[
C_{a,-a^2/4}(z)=\mathcal{B}_{3}(az/2)^2
\]
and
\[
c_n(a,-a^2/4)=\left(\frac{a}{2}\right)^n\binom{3n+2}{n}\frac{2}{3n+2},
\]
in particular $c_n(2,-1)=A006013$.
The density of $\mu(2,-1)$,
can be found in \cite{pezy}, formula (29) and \cite{mlopezy2013}, formula(42).
The cumulant sequence $\kappa_n(2,-1)$ is particularly simple: $2,3,4,5,\ldots$ ($A000027$ in OEIS).

\section{The case $r=4$}\label{sectionr4}

Put $\mathbf{a}:=(a,b,e)\in\mathbb{R}^3$, with $e\ne0$, $P_{a,b,e}(w)=w-aw^2-bw^3-ew^4$.
We have $c_0(\mathbf{a})=1$, $c_1(\mathbf{a})=a$, $c_2(\mathbf{a})=2 a^2 + b$, $c_3(\mathbf{a})=5 a^3 + 5 a b + e$,
$c_4(\mathbf{a})=14 a^4 + 21 a^2 b + 3 b^2 + 6 a e$, which leads to

\begin{proposition}\label{propositionr4necessary}
If the sequence $c_n(a,b,e)$ is positive definite then
\begin{equation}\label{necessaryr4}
a^6 + 3 a^4 b + 3 a^2 b^2 + 2 b^3 - 2 a b e - e^2\ge0.
\end{equation}
\end{proposition}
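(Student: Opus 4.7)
The plan is to derive (\ref{necessaryr4}) from positive semi-definiteness of the $3\times 3$ Hankel submatrix
\[
M:=\begin{pmatrix} c_0 & c_1 & c_2\\ c_1 & c_2 & c_3\\ c_2 & c_3 & c_4\end{pmatrix},
\]
with $c_n=c_n(a,b,e)$. Since $(c_n)$ is positive definite, $M$ is positive semi-definite as a principal submatrix of the infinite Hankel matrix $(c_{i+j})$, hence $\det M\ge 0$. It therefore suffices to show that $\det M$ equals the polynomial on the left-hand side of (\ref{necessaryr4}).

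Expanding $\det M$ directly is feasible but clumsy because $c_1=a$ is nonzero in general. I would first simplify via a determinant-preserving congruence corresponding to recentering. Let $T$ be the lower-triangular matrix with entries $T_{ij}=\binom{i}{j}(-a)^{i-j}$; then $\det T=1$, and Vandermonde's identity gives $(TMT^{\top})_{ij}=\tilde c_{i+j}$ where
\[
\tilde c_n:=\sum_{k=0}^{n}\binom{n}{k}(-a)^{n-k}c_k
\]
are the ``centered moments'' (if $(c_n)$ are moments of a measure $\mu$, then $\tilde c_n$ is the $n$-th moment of the translate of $\mu$ by $-a$). By construction $\tilde c_0=1$ and $\tilde c_1=0$, so the $3\times 3$ Hankel determinant collapses to
\[
\det M=\tilde c_2\tilde c_4-\tilde c_3^{\,2}-\tilde c_2^{\,3}.
\]

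Next I would compute the centered moments from the explicit values of $c_1,\dots,c_4$ recorded just before the proposition, obtaining
\[
\tilde c_2=a^2+b,\qquad \tilde c_3=a^3+2ab+e,\qquad \tilde c_4=3a^4+7a^2b+3b^2+2ae.
\]
Substituting into $\tilde c_2\tilde c_4-\tilde c_3^{\,2}-\tilde c_2^{\,3}$ and collecting monomials, the two $a^3 e$ contributions cancel and the $abe$ contributions combine to $-2abe$; the remaining terms assemble to $a^6+3a^4b+3a^2b^2+2b^3-2abe-e^2$, which is exactly the desired polynomial.

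There is no serious obstacle here: once the reduction to the centered Hankel matrix is in place, the conclusion is a finite polynomial identity requiring only careful bookkeeping. The only conceptual point is the invariance $\det M=\det(TMT^{\top})$, which follows from $\det T=1$.
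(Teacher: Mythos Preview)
Your argument is correct and follows the same idea as the paper: the paper simply lists $c_0,\dots,c_4$ and says ``which leads to'' the proposition, the intended content being that the $3\times3$ Hankel determinant $\det(c_{i+j})_{i,j=0}^{2}$ equals the polynomial in (\ref{necessaryr4}) and must be nonnegative. Your centering trick via $T$ is a clean way to carry out that computation, which the paper leaves implicit.
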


Since
\[
a^6 + 3 a^4 b + 3 a^2 b^2 + 2 b^3 - 2 a b e - e^2
=(a^2 + b)^3+(a^2 + b)b^2 - (a b + e)^2,
\]
inequality (\ref{necessaryr4}) implies that $a^2 + b\ge0$.

\subsection{Real roots.}
Now we will provide a sufficient condition.

\begin{theorem}\label{theoremr4realroots}
The polynomial $P'_{a,b,e}(w)$ has only real roots if and only if
\begin{equation}\label{sufficientr4}
9 a^2 b^2 + 27 b^3 - 32 a^3 e - 108 a b e - 108 e^2\ge0.
\end{equation}
If this is the case then $c_n(a,b,e)$ is positive definite.
\end{theorem}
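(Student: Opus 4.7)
The plan is to reduce the statement to the well-known discriminant criterion for real-rootedness of a cubic, and then invoke Corollary~\ref{corollarypositivedefinite} for the positive definiteness.

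First, I would compute
\[
P'_{a,b,e}(w)=1-2aw-3bw^2-4ew^3,
\]
which is an honest cubic since $e\ne0$. I then recall that a real cubic
$Aw^3+Bw^2+Cw+D$ with $A\ne0$ has all three roots real (counted with multiplicity)
if and only if its discriminant
\[
\Delta=18ABCD-4B^3D+B^2C^2-4AC^3-27A^2D^2
\]
is non-negative. Applying this to $A=-4e$, $B=-3b$, $C=-2a$, $D=1$, a direct
calculation gives
\[
\Delta=-432\,abe+108\,b^3+36\,a^2b^2-128\,a^3e-432\,e^2
=4\bigl(9a^2b^2+27b^3-32a^3e-108abe-108e^2\bigr),
\]
so the sign of $\Delta$ is exactly the sign of the expression in
(\ref{sufficientr4}). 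This proves the stated equivalence for real-rootedness of
$P'_{a,b,e}(w)$.

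For the second assertion, once all three roots of $P'_{a,b,e}(w)$ are real,
Corollary~\ref{corollarypositivedefinite} immediately yields that
$c_n(a,b,e)$ is positive definite, so no further work is needed.

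The only potentially delicate point is the discriminant computation, but it is
routine once the coefficients are lined up correctly; the sign bookkeeping in
$-4AC^3$ (with $C^3=-8a^3$) and in $18ABCD$ is the easiest place to slip up,
so I would verify the final polynomial by expanding $-4e(w-w_1)(w-w_2)(w-w_3)$
symbolically for a generic test case to cross-check the coefficient
$4$ that converts $\Delta$ into the form (\ref{sufficientr4}).
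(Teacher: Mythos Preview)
Your argument is correct. Both you and the paper reduce the question to a standard criterion for real-rootedness of the cubic $P'_{a,b,e}(w)=1-2aw-3bw^2-4ew^3$, and both invoke Corollary~\ref{corollarypositivedefinite} for the positive definiteness conclusion. The difference lies in how the real-rootedness criterion is obtained.

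You go straight to the discriminant formula $\Delta=18ABCD-4B^3D+B^2C^2-4AC^3-27A^2D^2$ and plug in $A=-4e$, $B=-3b$, $C=-2a$, $D=1$, arriving at $\Delta=4\bigl(9a^2b^2+27b^3-32a^3e-108abe-108e^2\bigr)$ in one step. The paper instead argues from first principles: it locates the critical points of $P'_{a,b,e}$ by solving $P''_{a,b,e}(w)=0$, obtaining $w_{\pm}=(-3b\pm\sqrt{9b^2-24ae})/(12e)$, then evaluates $P'_{a,b,e}(w_{\pm})$ explicitly and observes that the cubic has three real roots exactly when these two critical values have opposite (or zero) signs, which after squaring yields~(\ref{sufficientr4}).

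Your route is shorter and entirely legitimate; the only cost is that it presupposes the discriminant formula as a black box. The paper's route is more self-contained and makes the geometry transparent (local max above zero, local min below zero), at the price of a longer computation. Either is acceptable, and your discriminant calculation checks out term by term.
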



\begin{proof}
It is elementary to see that
\[
P'_{a,b,e}(w)=1-2aw-3bw^2-4ew^3
\]
has only real roots if and only if
\[
P''_{a,b,e}(w)=-2(a+3bw+6ew^2)
\]
has two real roots $w_{-},w_{+}$
such that either $\mathrm{sgn}(P'_{a,b,e}(w_{-}))\ne\mathrm{sgn}(P'_{a,b,e}(w_{+}))$
or $w_{-}=w_{+}$ and $P'_{a,b,e}(w_{-})=0$.
Then $3 b^2 \ge 8 a e$ and
\[
w_{\pm}=\frac{-3b\pm\sqrt{9b^2-24ae}}{12e}.
\]
Since
\[
P'_{a,b,e}(w_{\pm})=\frac{36abe-9b^3+72e^2\pm\sqrt{3}(3b^2-8ae)^{3/2}}{72e^2},
\]
we have $P'_{a,b,e}(w_{-})\le0\le P'_{a,b,e}(w_{+})$ if and only if
\begin{equation}\label{sufficientr4equiv}
(36abe-9b^3+72e^2)^2\le 3(3b^2-8ae)^3,
\end{equation}
which is equivalent to (\ref{sufficientr4}) and implies that $3b^2\ge8ae$.
\end{proof}

\textbf{Remark.} If the sequence $c_n(a,b,e)$ is positive definite then, by Proposition~\ref{propositionrtransform},
the free cumulants of the corresponding probability distribution $\mu(a,b,e)$
are the coefficient in the Taylor expansion
\[
\frac{a z+b z^2+e z^3}{1-a z-b z^2-c z^3}=\sum_{n=1}^{\infty}\kappa_n(a,b,e)z^n.
\]
One can check that $\det \left(\kappa_{i+j+2}(a,b,e)\right)_{i,j=0}^2=-e^4$,
so if $e\ne0$ then $\mu(a,b,e)$ is never infinitely divisible with respect
to the additive free convolution~$\boxplus$.

\subsection{A special subclass}

Now we will confine ourselves to the case when the coefficients satisfy
\begin{equation}\label{r4complexroots}
b^3=4abe+8e^2.
\end{equation}
In particular we will describe those parameters $a,b,e$ for which
$P'_{a,b,e}(w)$ admits complex roots but $\mathcal{N}_{a,b,e}\subseteq\mathbb{R}$.

\begin{proposition}\label{propositionr4}
Assume that $b^3=4abe+8e^2$.
Then
\begin{align}
P_{a,b,e}(w)&=-\frac{w(2ew+b)\left(2bew^2+b^2w-4e\right)}{4be},\label{r4factorp}\\
P'_{a,b,e}(w)&=-\frac{(4ew+b)\left(2bew^2+b^2w-2e\right)}{2be}\label{r4factorpp}
\end{align}
and
\[
\mathcal{N}_{a,b,e}=\left\{
\frac{-b^4-32be^2}{256e^3},\frac{e}{b^2}\right\}. 
\]
In particular, $c_n(a,b,e)$ is positive definite.

On the other hand, if the polynomial $P'_{a,b,e}(w)$ admits complex roots
$w_1,\overline{w_1}\in\mathbb{C}\setminus\mathbb{R}$ which
satisfy $P_{a,b,e}(w_1)=P_{a,b,e}(\overline{w_1})\in\mathbb{R}$ then  $b^3=4abe+8e^2$ and $b^4+16be^2<0$.
\end{proposition}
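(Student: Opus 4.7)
Plan. The direct assertion is a verification. Since $b, e \neq 0$, the hypothesis $b^3 = 4abe + 8e^2$ rewrites as $a = (b^3 - 8e^2)/(4be)$. Substituting this into the right-hand side of (\ref{r4factorp}) and expanding the triple product $-w(2ew+b)(2bew^2+b^2w-4e)/(4be)$ recovers $w - aw^2 - bw^3 - ew^4 = P_{a,b,e}(w)$ term by term; an analogous expansion verifies (\ref{r4factorpp}).

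With $P'_{a,b,e}$ factored, its roots are $w = -b/(4e)$ and the two roots of the quadratic $2bew^2 + b^2w - 2e$. Direct substitution of $w = -b/(4e)$ into (\ref{r4factorp}) yields $(-b^4 - 32be^2)/(256e^3)$. For a root $w_0$ of the quadratic the key observation is that the cubic factor of $P_{a,b,e}$ differs only by a constant from this quadratic: $2bew_0^2 + b^2w_0 - 4e = -2e$, while $w_0(2ew_0+b) = 2ew_0^2 + bw_0 = 2e/b$ (from dividing $2bew_0^2 + b^2w_0 = 2e$ by $b$). Combining these in (\ref{r4factorp}) gives $P_{a,b,e}(w_0) = e/b^2$. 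Both critical values are real, so $\mathcal{N}_{a,b,e} \subseteq \mathbb{R}$ and Theorem~\ref{theorempositivedefinite} delivers positive definiteness.

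For the converse, write $w_1 = p + q\mathrm{i}$ with $q \neq 0$, and expand $\Im w_1^k = q,\,2pq,\,q(3p^2 - q^2),\,4pq(p^2 - q^2)$ for $k = 1,\dots,4$ (and analogously $\Re w_1^k$). The three real conditions $\Re P'_{a,b,e}(w_1) = 0$, $\Im P'_{a,b,e}(w_1) = 0$ and $\Im P_{a,b,e}(w_1) = 0$ simplify, after dividing the latter two by $q$, to the system
\begin{align*}
P'_{a,b,e}(p) + 3(b + 4ep)q^2 &= 0, \\
P''_{a,b,e}(p) + 4eq^2 &= 0, \\
P'_{a,b,e}(p) + (b + 4ep)q^2 &= 0.
\end{align*}
Subtracting the third equation from the first forces $(b + 4ep)q^2 = 0$, and $q \neq 0$ yields $p = -b/(4e)$. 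Then $P'_{a,b,e}(-b/(4e)) = 0$, after clearing denominators, is precisely $b^3 = 4abe + 8e^2$. Plugging $p = -b/(4e)$ and $a = (b^3 - 8e^2)/(4be)$ into the middle equation produces $q^2 = -(b^3 + 16e^2)/(16be^2)$, and the positivity $q^2 > 0$ rearranges, on multiplying by $b^2$, to $b^4 + 16be^2 < 0$. The only subtle point is noticing that $(b + 4ep)q^2$ appears in two of the three equations with different coefficients $3$ and $1$, which forces its vanishing without any direct information on $p$ or $q$; the rest is routine computation.
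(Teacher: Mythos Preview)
Your argument is correct in both parts. The forward direction matches the paper's (it too simply verifies the factorizations and plugs the roots of $P'_{a,b,e}$ into $P_{a,b,e}$), and your trick of reading $w_0(2ew_0+b)=2e/b$ and $2bew_0^2+b^2w_0-4e=-2e$ directly off the quadratic relation is a nice shortcut.

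For the converse you take a genuinely different route. The paper parametrizes $a,b,e$ by the three roots $w_0,w_1,w_2$ of $P'_{a,b,e}$ via Vieta, writes out $P_{a,b,e}(w_1)$ in closed form, and computes $\Im\bigl(w_1^2(w_1^2-2w_0w_1+6w_0w_2-2w_1w_2)\bigr)=8y^3(w_0-x)$ to force the real root $w_0$ to equal $\Re w_1$; the relations on $a,b,e$ then fall out of the parametrization. You instead work directly with $p=\Re w_1$, $q=\Im w_1$ and the coefficients, exploiting the pleasant identities $\Re P'_{a,b,e}(w_1)=P'_{a,b,e}(p)+3(b+4ep)q^2$ and $q^{-1}\Im P_{a,b,e}(w_1)=P'_{a,b,e}(p)+(b+4ep)q^2$. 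The mismatch of coefficients $3$ versus $1$ then kills $(b+4ep)q^2$ outright, giving $p=-b/(4e)$ with no reference to the third root. This is more self-contained and avoids the Vieta bookkeeping; the paper's version, on the other hand, makes the geometric content (that the real root of $P'$ sits exactly at the midpoint of the complex conjugate pair) more visible.
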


Note that if $e>0$, $b\ne0$ then $(-b^4-32be^2)/(256e^3)\le e/b^2$, 
 with equality only if $b^3=-16e^2$.

\begin{proof}
It is easy to check that if $b^3=4abe+8e^2$ then (\ref{r4factorp}), (\ref{r4factorpp}) hold
and for the roots
\[
\frac{-b}{4e},\qquad w_{\pm}:=\frac{-b^2\pm\sqrt{b^4+16be^2}}{4be}
\]
of $P'_{a,b,e}(w)$ we have
\[
P_{a,b,e}\left(\frac{-b}{4e}\right)=\frac{-b^4-32be^2}{256e^3},\qquad
P_{a,b,e}\left(w_{\pm}\right)=\frac{e}{b^2}.
\]

Now assume that $w_0,w_1,w_2$ are the roots of $P'_{a,b,e}(w)$, so that
\[
1-2aw-3bw^2-4ew^3=-4e(w-w_0)(w-w_1)(w-w_2).
\]
This implies that $w_0,w_1,w_2\ne0$ and
\begin{align*}
a&=\frac{w_0 w_1+w_0 w_2+w_1 w_2}{2w_0 w_1 w_2},\\
b&=-\frac{w_0+w_1+w_2}{3w_0 w_1 w_2},\\
e&=\frac{1}{4w_0 w_1 w_2}.
\end{align*}
Then
\begin{align*}
P_{a,b,e}(w_1)&=\frac{w_1}{2}-\frac{w_1^2}{6w_0}-\frac{w_1^2}{6 w_2}+\frac{w_1^3}{12w_0 w_2}\\
&=\frac{w_1(w_1^2-2w_0 w_1+6w_0 w_2-2w_1 w_2)}{12w_0 w_2}.
\end{align*}

Now assume that $w_0\in\mathbb{R}$, $w_1=x+y\mathrm{i}$, $w_2=\overline{w_1}=x-y\mathrm{i}$, $x,y\in\mathbb{R}$, $y\ne0$. Then
\[
\Im(w_1^2(w_1^2-2w_0 w_1+6w_0 w_2-2w_1 w_2))=8y^3(w_0-x)
\]
so that if $P_{a,b,e}(w_1)\in\mathbb{R}$ then $w_0=x$ and
\[
a=\frac{3x^2+y^2}{2x(x^2+y^2)},\quad
b=\frac{-1}{x^2+y^2},\quad
e=\frac{1}{4x(x^2+y^2)}.
\]
This implies $b<0$, $b^3+16e^2>0$ and $b^3=4abe+8e^2$.
\end{proof}

\begin{figure}
    \centering
    \includegraphics[width=0.8\textwidth]{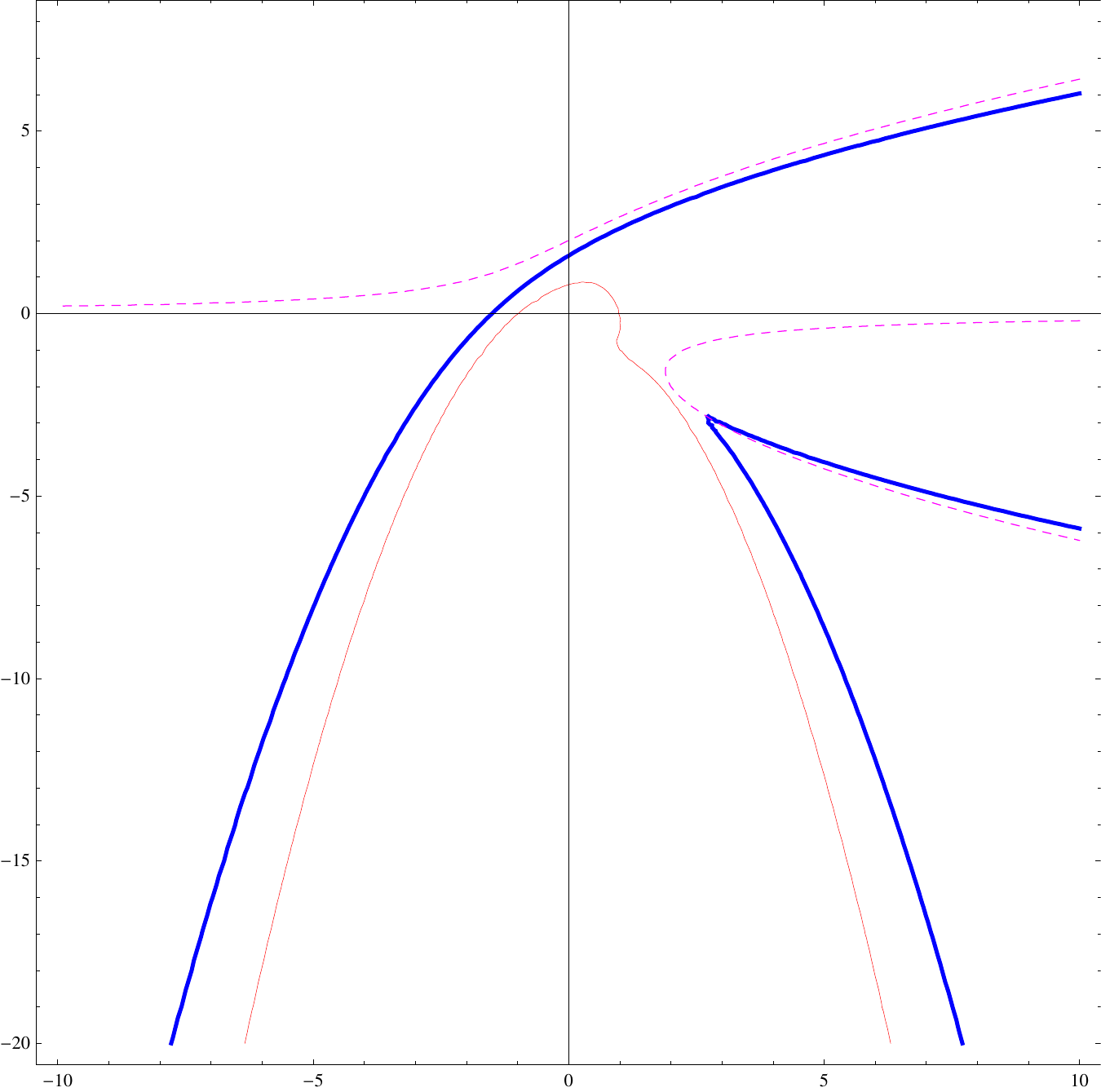}
    \caption{Here we illustrate conditions (\ref{necessaryr4}), (\ref{sufficientr4}) and (\ref{r4complexroots})
    for $e=1$ and $-10\le a\le10$, $-20\le b\le8$.
     The thin red line represents the equation: $a^6 + 3 a^4 b + 3 a^2 b^2 + 2 b^3 - 2 a b  - 1=0$, the thick blue: $9 a^2 b^2 + 27 b^3 - 32 a^3 - 108 a b - 108=0$,  and the dashed magenta: $b^3=4ab+8$.}
    \label{figr4}
\end{figure}

On Figure~\ref{figr4} we illustrate the conditions (\ref{necessaryr4}), (\ref{sufficientr4}) and (\ref{r4complexroots})
for $e=1$. If $(a,b)$ is below the thin red line then $c_n(a,b,1)$ is not positive definite.
If $(a,b)$ is either above the left piece or south-east of the right piece of the thick blue line then
the roots of $P'_{a,b,1}(w)$ are real and $c_n(a,b,1)$ is positive definite.
On the dashed magenta line there are points $(a,b)$ which satisfy $b^3=4ab+8$.
For some of these points, namely when $-16<b^3<0$, the polynomial $P'_{a,b,1}(w)$ has complex nonreal roots,
but still $\mathcal{N}_{a,b,1}\subseteq\mathbb{R}$ and therefore $c_n(a,b,1)$ is positive definite.


It turns out that if $b^3=4abe+8e^2$ then we are able to write down the generating function.

\begin{proposition}
If $b^3=4abe+8e^2$ then
\begin{equation}\label{r4specialdecompositionformula}
\begin{gathered}
w-aw^2-bw^3-ew^4-z\\
=-e \left(w^2+\frac{bw}{2e}-\frac{1}{b}-\frac{\sqrt{e^2 - b^2 ez}}{be}\right)
\left(w^2+\frac{bw}{2e}-\frac{1}{b}+\frac{\sqrt{e^2 - b^2 ez}}{be}\right).
\end{gathered}
\end{equation}
Consequently,
\begin{equation}\label{r4dformula}
D_{a,b,e}(z)=\frac{-b^2+\sqrt{b^4+16be^2-\mathrm{sgn}(e) 16be\sqrt{e^2-b^2 ez}}}{4be},
\end{equation}
\begin{equation}
C_{a,b,e}(z)=\frac{-b^2+\sqrt{b^4+16be^2-\mathrm{sgn}(e) 16be\sqrt{e^2-b^2 ez}}}{4bez}
\end{equation}
for $z$ in a neighborhood of $0$.
\end{proposition}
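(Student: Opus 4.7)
The proof splits into two parts: establishing the quartic factorization~\eqref{r4specialdecompositionformula}, and then extracting the explicit formula for $D_{a,b,e}$ from it.

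For the factorization, I would proceed by direct expansion. The right-hand side has the form $-e\bigl[Q(w)^2 - R(z)^2\bigr]$ where $Q(w) := w^2 + bw/(2e) - 1/b$ and $R(z) := \sqrt{e^2 - b^2 ez}/(be)$. A routine computation yields
\[
Q(w)^2 = w^4 + \frac{b}{e}w^3 + \Bigl(\frac{b^2}{4e^2} - \frac{2}{b}\Bigr)w^2 - \frac{w}{e} + \frac{1}{b^2}, \qquad R(z)^2 = \frac{1}{b^2} - \frac{z}{e},
\]
so that $-e[Q(w)^2 - R(z)^2] = -ew^4 - bw^3 - \bigl(\tfrac{b^2}{4e} - \tfrac{2e}{b}\bigr)w^2 + w - z$. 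Comparing with $P_{a,b,e}(w) - z$, the $w^3$, $w^4$, and linear-in-$w$ coefficients match automatically; the only nontrivial condition is on the $w^2$-coefficient, which forces $a = \tfrac{b^2}{4e} - \tfrac{2e}{b} = \tfrac{b^3 - 8e^2}{4be}$. This is precisely the hypothesis $b^3 = 4abe + 8e^2$, so the identity~\eqref{r4specialdecompositionformula} holds exactly in the hypothesized subclass.

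Given the factorization, $D_{a,b,e}(z)$ is a root of one of the two quadratic factors on the right. At $z = 0$ one has $\sqrt{e^2-b^2 ez}=|e|$, so $R(0) = \mathrm{sgn}(e)/b$. The constant term of the factor containing $+\mathrm{sgn}(e)\cdot R(z)$ is then $-1/b + \mathrm{sgn}(e)\cdot\mathrm{sgn}(e)/b = 0$, so $w=0$ is a root at $z=0$ only of that factor, and the branch of $D_{a,b,e}$ we want satisfies
\[
w^2 + \frac{bw}{2e} - \frac{1}{b} + \mathrm{sgn}(e) \cdot \frac{\sqrt{e^2 - b^2 ez}}{be} = 0.
\]
Applying the quadratic formula and picking the root that vanishes at $z=0$ yields~\eqref{r4dformula} after rewriting the discriminant $b^2/(4e^2) + 4/b - 4\,\mathrm{sgn}(e)\sqrt{e^2 - b^2 ez}/(be)$ over the common denominator $4b^2e^2$. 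Dividing by $z$ then gives the stated formula for $C_{a,b,e}$.

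The only delicate step is sign bookkeeping: the $\mathrm{sgn}(e)$ factor requires separate consideration of the cases $e>0$ and $e<0$ when identifying the correct quadratic factor and when choosing the branch of the outer square root, in order to ensure that $D_{a,b,e}(0) = 0$ holds uniformly. Once these sign choices are fixed consistently, both the factorization step and the quadratic-formula step are purely algebraic.
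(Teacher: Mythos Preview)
Your argument is correct and follows exactly the route the paper takes: verify the factorization~\eqref{r4specialdecompositionformula} by direct expansion (the paper just says ``it is easy to verify''), then use $D_{a,b,e}(0)=0$ to select the quadratic factor with the $+\mathrm{sgn}(e)$ sign, and finally solve the quadratic. Your write-up is in fact more detailed than the paper's, which leaves the expansion and the sign bookkeeping entirely to the reader.
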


\begin{proof}
It is easy to verify (\ref{r4specialdecompositionformula}). This implies that for $z$
in a neighborhood of $0$
\[
D_{a,b,e}(z)^2+\frac{b D_{a,b,e}(z)}{2e}-\frac{1}{b}+\mathrm{sgn}(e)\frac{\sqrt{e^2-b^2 ez}}{be}=0,
\]
as $D_{a,b,e}(0)=0$, which, in turn, leads to (\ref{r4dformula}).
\end{proof}

Finally we observe that the case $b^3=4abe+8e^2$ comes from the monotonic convolution.
Indeed, \[(\alpha_1)\rhd(\alpha_2)=(\alpha_1+\alpha_2,-2\alpha_1 \alpha_2,\alpha_1^2 \alpha_2)\]
and every triple $(a,b,e)\in\mathbb{R}^3$, satisfying $b^3=4abe+8e^2$, $b,e\ne0$,
can be represented as $(\alpha_1+\alpha_2,-2\alpha_1 \alpha_2,\alpha_1^2 \alpha_2)$ for some $\alpha_1,\alpha_2\in\mathbb{R}\setminus\{0\}$.


\section{Symmetric case}\label{sectionsymmetric}

In this section we assume that $a_j=0$ whenever $j$ is even, i.e. that the polynomial $P_{\mathbf{a}}(w)$ is odd.
Then $D_{\mathbf{a}}(z)$ is odd as well, $C_{\mathbf{a}}(z)$ is even and
$c_{n}(\mathbf{a})=0$ whenever $n$ is odd. If in addition the sequence $c_n(\mathbf{a})$ is positive definite
then the corresponding probability distribution $\mu(\mathbf{a})$ is symmetric,
i.e. $\mu(\mathbf{a})(X)=\mu(\mathbf{a})(-X)$ for every Borel set $X\subseteq\mathbb{R}$.
The case $r=3$ was already studied in Subsection~\ref{subsectionr3specialcases}.

\subsection{The case $r=5$.} Assume that $\mathbf{a}=(0,a,0,b)$, $b\ne0$, $P_{\mathbf{a}}(w)=w-aw^3-bw^5$.
Then a necessary condition for positive definiteness of $c_{n}(0,a,0,b)$ is $a>0$ and $2 a^2 + b\ge0$.

\begin{proposition}
If $a>0$, $b<0$ and $9a^2+20b\ge0$ then the sequence $c_n(0,a,0,b)$ is positive definite.
\end{proposition}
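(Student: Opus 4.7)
The plan is to invoke Corollary~\ref{corollarypositivedefinite}, which reduces positive definiteness to showing that all roots of $P'_{\mathbf{a}}(w)$ are real. Here $\mathbf{a}=(0,a,0,b)$, so
\[
P_{\mathbf{a}}(w)=w-aw^3-bw^5,\qquad P'_{\mathbf{a}}(w)=1-3aw^2-5bw^4,
\]
which is a biquadratic polynomial. So the whole question collapses to analyzing one quadratic in the variable $u=w^2$.

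Substituting $u=w^2$, the roots of $P'_{\mathbf{a}}(w)$ correspond to the solutions of
\[
5bu^2+3au-1=0,
\]
namely
\[
u_{\pm}=\frac{-3a\pm\sqrt{9a^2+20b}}{10b}.
\]
I would then verify, under the hypotheses $a>0$, $b<0$, $9a^2+20b\ge0$, that both $u_{\pm}$ are real and strictly positive. Reality is immediate from $9a^2+20b\ge0$. For positivity I use Vi\`ete: the product of the roots is $-1/(5b)>0$ since $b<0$, and the sum is $-3a/(5b)>0$ since $a>0$ and $b<0$. A quadratic whose discriminant, sum of roots, and product of roots are all nonnegative has both roots in $[0,\infty)$; combined with the positivity of the product, this forces $u_{-},u_{+}>0$ (with $u_-=u_+$ exactly when $9a^2+20b=0$).

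Consequently $P'_{\mathbf{a}}(w)$ factors as $-5b(w^2-u_{-})(w^2-u_{+})$ with $u_{\pm}>0$, so its four roots $\pm\sqrt{u_{-}},\pm\sqrt{u_{+}}$ are all real. Corollary~\ref{corollarypositivedefinite} then yields positive definiteness of $c_n(0,a,0,b)$. There is no genuine obstacle here; everything reduces, via the reality criterion already established in Section~\ref{sectiongenerating}, to the elementary sign analysis of a quadratic.
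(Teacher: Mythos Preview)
Your proof is correct and follows essentially the same approach as the paper: both compute $P'_{0,a,0,b}(w)=1-3aw^2-5bw^4$, solve the resulting quadratic in $w^2$, and check that the two values of $w^2$ are real and nonnegative under the stated hypotheses, then appeal to Corollary~\ref{corollarypositivedefinite}. The only difference is cosmetic: the paper observes positivity directly from the explicit formula $w_0^2=(3a\pm\sqrt{9a^2+20b})/(-10b)$, whereas you use Vi\`ete's relations; both are immediate.
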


\begin{proof}
We have $P_{0,a,0,b}'(w)=1-3a w^2-5b w^4$, so $w_0$ is a root of $P_{0,a,0,b}'(w)$ if and only if
\[
w_0^2=\frac{3a\pm\sqrt{9a^2+20b}}{-10b}.
\]
If $a>0$, $b<0$ and $9a^2+20b\ge0$ then all these roots are real.
\end{proof}

Example: Take $\mathbf{a}=(0,2,0,-1)$.
Then $C_{\mathbf{a}}(z)$ satisfies $C_{\mathbf{a}}(z)=1+2z^2 C_{\mathbf{a}}(z)^{3}-z^4 C_{\mathbf{a}}(z)^{5}$,
equivalently: $C_{\mathbf{a}}(z)\big(1-z^2 C_{\mathbf{a}}(z)^2\big)^2=1$.
Putting $C_{\mathbf{a}}(z):=B(z)^2$, we get $B(z)=1+z^2 B(z)^5$, which means that $B(z)=\mathcal{B}_{5}(z^2)$.
Therefore $C_{\mathbf{a}}(z)=\mathcal{B}_{5}(z^2)^2$ and, consequently,
$c_{2n}(0,2,0,-1)=\binom{5n+2}{n}2/(5n+2)$ ($A118969$).

\subsection{The case $r=7$.} Let us now consider $\mathbf{a}=(0,a,0,b,0,e)$, with $e\ne0$.
It is easy to check that if $c_n(0,a,0,b,0,e)$ is positive definite then $a\ge0$.

\begin{proposition}
If $a\ge0$, $b\le0$, $e>0$ and
\begin{equation}\label{symmetric7inequality}
225 a^2 b^2 + 500 b^3 - 756 a^3 e - 1890 a b e - 1323 e^2\ge0
\end{equation}
then the sequence $c_n(0,a,0,b,0,e)$ is positive definite.
\end{proposition}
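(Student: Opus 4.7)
The plan is to apply Corollary~\ref{corollarypositivedefinite}: it suffices to show that under the stated hypotheses, every root of
\[
P'_{0,a,0,b,0,e}(w) = 1 - 3aw^2 - 5bw^4 - 7ew^6
\]
is real. Since this polynomial is even in $w$, I would substitute $u := w^2$ and work instead with the cubic
\[
Q(u) := -7eu^3 - 5bu^2 - 3au + 1.
\]
The roots of $P'_{0,a,0,b,0,e}(w)$ are then $\pm\sqrt{u_i}$, where $u_1,u_2,u_3$ are the roots of $Q(u)$, so I need exactly that $Q$ has three real roots and all of them are non-negative.

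For the first requirement, I would observe that the classical discriminant of the cubic $\alpha u^3 + \beta u^2 + \gamma u + \delta$, namely $\Delta = 18\alpha\beta\gamma\delta - 4\beta^3\delta + \beta^2\gamma^2 - 4\alpha\gamma^3 - 27\alpha^2\delta^2$, controls real rootedness ($\Delta\ge0$ iff all roots are real). Substituting $(\alpha,\beta,\gamma,\delta) = (-7e, -5b, -3a, 1)$ gives, after a short expansion,
\[
\Delta = 225 a^2 b^2 + 500 b^3 - 756 a^3 e - 1890 a b e - 1323 e^2,
\]
which is precisely the left-hand side of (\ref{symmetric7inequality}). So the hypothesis ensures all three roots of $Q$ are real.

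For the second requirement, I would apply Descartes' rule of signs to
\[
Q(-u) = 7eu^3 - 5bu^2 + 3au + 1.
\]
Under $a\ge0$, $b\le0$, $e>0$, all four coefficients $7e$, $-5b$, $3a$, $1$ are non-negative, so $Q(-u)$ has no sign changes and hence no positive root; equivalently, $Q$ has no negative root. Combined with the previous step, $u_1,u_2,u_3\ge0$, so $\pm\sqrt{u_i}$ are real and exhaust the roots of $P'_{0,a,0,b,0,e}$. Corollary~\ref{corollarypositivedefinite} then gives positive definiteness of $c_n(0,a,0,b,0,e)$.

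The only non-trivial step is the discriminant computation, and that is just bookkeeping; the rest is an elementary sign-count. No step looks like a real obstacle.
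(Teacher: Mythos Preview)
Your proof is correct and follows the same overall strategy as the paper: reduce to showing that the cubic $Q(u)=1-3au-5bu^2-7eu^3$ has three non-negative real roots, then invoke Corollary~\ref{corollarypositivedefinite}. The execution differs, however. The paper locates the two critical points $t_\pm$ of $Q$ explicitly, checks the signs $Q(0)>0$, $Q(t_-)<0$, $Q(t_+)>0$, $Q(+\infty)=-\infty$ via the equivalent form $4(25b^2-63ae)^3\ge(945abe-250b^3+1323e^2)^2$, and applies the intermediate value theorem; the boundary case of equality in~(\ref{symmetric7inequality}) is then handled by a limit argument. Your route is more compact: recognizing the left side of~(\ref{symmetric7inequality}) as the cubic discriminant gives real-rootedness in one stroke (including the repeated-root boundary case, so no limit is needed), and Descartes' rule replaces the explicit location of the roots. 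Both are valid; yours trades explicit information about where the roots sit for brevity.
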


\begin{proof}
It is sufficient to prove that (\ref{symmetric7inequality}) implies that the polynomial
\[
Q(t):=1-3at-5bt^2-7et^3
\]
has three positive roots.

First we note that (\ref{symmetric7inequality}) is equivalent to
\begin{equation}\label{symmetric7inequalityaux}
4(25 b^2 - 63 a e)^3 \ge (945abe-250 b^3+1323e^2)^2,
\end{equation}
hence implies that $25 b^2 - 63 a e\ge0$.
Therefore the derivative $Q'(t)=-3 a - 10 b t - 21 e t^2$ has two real roots:
\[
t_{\pm}=\frac{-5 b \pm\sqrt{25 b^2 - 63 a e}}{21 e},
\]
with $0\le t_{-}\le t_{+}$, and we have
\[
Q(t_{\pm})=\frac{945abe-250b^3+1323e^2\pm2(25b^2-63ae)^{3/2}}{1323 e^2}
\]
If (\ref{symmetric7inequalityaux}) holds with sharp inequality then we have $1=Q(0)>0$, $Q(t_{-})<0$, $Q(t_{+})>0$ and $\lim_{t\to\infty}Q(t)=-\infty$,
so there are $t_1,t_2,t_3$ such that $0<t_1<t_{-}<t_2<t_{+}<t_3$ such that $Q(t_1)=Q(t_2)=Q(t_3)=0$.
Now we recall that positive definiteness is preserved by pointwise limits.
\end{proof}

\section{Examples}\label{sectionexamples}

In this section we provide a record of examples of sequences $c_n(\mathbf{a})$
for which we can verify positive definiteness.
Most of them appear in OEIS, possibly with an additional $0$ or $1$ term at the beginning.
We refer also to Table~5 in~\cite{mathar2012}.

\subsection{Patalan numbers.}

Patalan numbers of order $p\in\mathbb{R}\setminus\{0\}$ (see \cite{richardson2015}) are defined
by the generating function
\begin{equation}\label{patalangenerating}
A_p(z)=\sum_{n=0}^{\infty}\mathrm{pat}_n(p)z^n:=\frac{1-\left(1-p^2 z\right)^{1/p}}{pz},
\end{equation}
so that
\[
\mathrm{pat}_n(p):=-p^{2n+1}\binom{n-1/p}{n+1}.
\]
Now we observe

\begin{proposition}
The sequence $\mathrm{pat}_n(p)$ is positive definite if and only if $|p|\ge1$.
Moreover, if $|p|>1$, $n\ge0$ then
\begin{equation}\label{patalanintegral}
\mathrm{pat}_n(p)=\int_{0}^{p^2} x^n\frac{(p^2-x)^{1/p}\sin(\pi/p)}{p\pi x^{1/p}}\,dx.
\end{equation}
\end{proposition}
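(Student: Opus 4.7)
The plan is to split the equivalence into an easy necessity coming from a single $2\times 2$ Hankel determinant, and a sufficiency that directly exhibits (\ref{patalanintegral}) as the moment representation of a positive measure, handling $|p|=1$ separately.

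First I would expand $(1-p^2z)^{1/p}=\sum_{k\ge 0}\binom{1/p}{k}(-p^2z)^k$ in (\ref{patalangenerating}) and read off the coefficient of $z^n$ in $A_p(z)$, which gives the three equivalent closed forms
\[
\mathrm{pat}_n(p)=(-1)^n p^{2n+1}\binom{1/p}{n+1}=\frac{p^n}{(n+1)!}\prod_{k=1}^{n}(pk-1)=\frac{p^{2n}\,\Gamma(n+1-1/p)}{\Gamma(n+2)\,\Gamma(1-1/p)}.
\]
From the product form one reads off $\mathrm{pat}_0(p)=1$, $\mathrm{pat}_1(p)=p(p-1)/2$ and $\mathrm{pat}_2(p)=p^2(p-1)(2p-1)/6$, and a short computation (common factor $p^2(p-1)/12$) collapses to the key identity
\[
\mathrm{pat}_0(p)\,\mathrm{pat}_2(p)-\mathrm{pat}_1(p)^2=\frac{p^2(p^2-1)}{12}.
\]
Since positive definiteness forces this $2\times 2$ Hankel determinant to be non-negative, and $p\ne 0$, we already conclude $|p|\ge 1$; this settles the ``only if'' direction in a single line of algebra.

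For the ``if'' direction with $|p|>1$, the plan is to verify (\ref{patalanintegral}) directly. The substitution $x=p^2 y$ converts the right-hand side of (\ref{patalanintegral}) into an Euler beta integral,
\[
\int_0^{p^2}x^n\frac{(p^2-x)^{1/p}\sin(\pi/p)}{p\pi x^{1/p}}\,dx=\frac{p^{2n+1}\sin(\pi/p)}{\pi}\cdot\frac{\Gamma(n+1-1/p)\,\Gamma(1+1/p)}{\Gamma(n+2)},
\]
and the Euler reflection formula $\Gamma(1+1/p)\,\Gamma(1-1/p)=(\pi/p)/\sin(\pi/p)$ collapses this into the Gamma-function expression for $\mathrm{pat}_n(p)$ above. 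The hypothesis $|p|>1$ is used twice: it gives $|1/p|<1$, so the beta integral converges for every $n\ge 0$, and a small case check on $p>1$ and $p<-1$ shows that $\sin(\pi/p)/p>0$, so the density in (\ref{patalanintegral}) is non-negative on $(0,p^2)$. Hence $\mathrm{pat}_n(p)$ are the moments of a positive measure on $[0,p^2]$ and the sequence is positive definite.

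The boundary cases $p=\pm 1$ are settled by direct evaluation in (\ref{patalangenerating}): $A_1(z)=1$, so $\mathrm{pat}_n(1)=\delta_{n,0}$ are the moments of $\delta_0$, and $A_{-1}(z)=1/(1-z)$, so $\mathrm{pat}_n(-1)=1$ are the moments of $\delta_1$, both trivially positive definite. I do not anticipate a real obstacle: the Hankel factorisation instantly closes the necessity, and the only step with genuine content on the sufficiency side is the reflection-formula simplification that identifies the beta integral with the Pochhammer product $\prod_{k=1}^n(pk-1)$.
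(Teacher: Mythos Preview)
Your proof is correct and follows essentially the same approach as the paper: the paper also computes the $2\times 2$ Hankel determinant $\mathrm{pat}_0(p)\mathrm{pat}_2(p)-\mathrm{pat}_1(p)^2=p^2(p^2-1)/12$ for necessity, and verifies (\ref{patalanintegral}) via the beta and gamma functions for sufficiency. Your write-up is more detailed --- in particular you treat the boundary cases $p=\pm 1$ explicitly, which the paper leaves implicit --- but the strategy is identical.
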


\begin{proof}
One can verify (\ref{patalanintegral}) by using the definitions
and basic properties of the beta and gamma functions.
On the other hand
\[
\mathrm{pat}_0(p)\mathrm{pat}_2(p)-\mathrm{pat}_1(p)^2=p^2(p^2-1)/12,
\]
so the sequence is not positive definite definite if $|p|<1$.
\end{proof}

Formula (\ref{patalangenerating}) implies that $z A_p(z)$ is the inverse function of
\[
P(w):=\frac{1-\left(1-p w\right)^p}{p^2}.
\]
In particular, if $p\ge2$ is an integer then $A_p(z)$ coincides with $C_{\mathbf{a}}(z)$,
where
\[
\mathbf{a}=(a_2,\ldots,a_p),\qquad a_k=\frac{(-p)^k}{p^2}\binom{p}{k}.
\]
Since $P'(w)=(1-pw)^{p-1}$ the positive definiteness of the corresponding sequence $c_n(\mathbf{a})$ is
also a consequence of Corollary~\ref{corollarypositivedefinite}.
For $p=2,3,4,5,6,7,8,9,10$
we obtain $A000108$ (Catalan numbers),
$A097188$ (or $A025748$), $A025749$, $A025750$, $A025751$, $A025752$, $A025753$.
$A025754$, $A025755$,
respectively.
The dilated version of $p=4$, with $\mathbf{a}=(3,-4,2)$, leads to $A048779$.
Let us also note that for $p=-2,-3$, $-4,-5,-6$, $-7,-8,-9,-10$ the sequence $\mathrm{pat}_n(p)$
appears in OEIS as $A001700$, $A034171$, $A034255$, $A034687$, $A034789$, $A034904$, $A034996$,
$A035097$, $A035323$, respectively.

\subsection{Fuss numbers.}
Assume that $p\ge2$ is an integer.
From (\ref{fussgeneratingformula}) we have $\mathcal{B}_{p}(z^{p-1})=C_{\mathbf{a}}(z)$
for $\mathbf{a}=(0,0,\ldots,0,1)$, with $p-2$~zeros. Therefore the Fuss numbers $\binom{np+1}{n}\frac{1}{np+1}$
of order $p$ appear as the nonzero terms in the sequence $c_n(\mathbf{a})$.

More interestingly, put $B(z):=\mathcal{B}_{p}(z)^{p-1}$, the generating function
of the sequence $\binom{np+p-1}{n}\frac{p-1}{np+p-1}$.
As a consequence of (\ref{fussgeneratingformula}) it
satisfies equation $B(z)\big(1-zB(z)\big)^{p-1}=1$, so that
$zB(z)$ is the composition inverse of $P(w)=w(1-w)^{p-1}$.
Therefore
\[
\binom{np+p-1}{n}\frac{p-1}{np+p-1}=c_n(a_2,\ldots,a_p),\qquad
a_k=\binom{p-1}{k-1}(-1)^k.
\]
Since $P'(w)=(1-pw)(1-w)^{p-2}$, Corollary~\ref{corollarypositivedefinite}
implies that this sequence is positive definite, which is already known \cite{mlotkowski2010,mlopezy2013,mlotkowskipenson2014binomial,forresterliu2014,liupego2014}.
Putting $p=2,3,4,5,6,7,8,9,10,11$ we get $A000108$ (Catalan numbers),
$A006013$, $A006632$, $A118971$, $A130564$, $A130565$, $A234466$, $A234513$, $A234573$, $A235340$
respectively.

\subsection{Various examples for $r=3$.}

Recall that by Theorem~\ref{theoremr3positivedefinite} the sequence $c_n(a,b)$ is positive definite if and only if $a^2+3b\ge0$.

Positive definite sequences:
$c_n(1,1)=A001002$,
$c_n(1,2)=A250886$,
$c_n(1,3)=A276314$,
$c_n(2,1)=A192945$,
$c_n(2,2)=A276310$,
$c_n(2,3)= A250887$,
$c_n(2,-1)= A006013$, 
$c_n(3,2)=A276315$,
$c_n(3,3)=A295541$ (up to a sign),
$c_n(3,4)=A250888$,
$c_n(3,-1)=A249924$,
$c_n(3,-2)=A085614$, 
$c_n(3,-3)=A097188$ (and also $A025748$), 
$c_n(4,-1)=A276316$,
$c_n(4,-3)=A250885$.

Also the following free powers are positive definite (see Proposition~\ref{propositionfreepower}):
$c_n^{\boxplus 2}(2,1)=A228966$,
$c_n^{\boxplus 3}(2,2)=A231554$.
In view of remarks in Subsection~\ref{subsectionr3specialcases}.1
the following sequences are positive definite: $c_{2n}^{\boxplus 2}(0,1)=A027307$,
$c_{2n}^{\boxplus 3}(0,1)=A219535$,
$2^n\cdot c_{2n}^{\boxplus 1/2}(0,1)=A003168$,
$2^n\cdot c_{2n}^{\boxplus 3/2}(0,1)=A219536$.
These equalities can be verified by comparing (\ref{freepowerformulad})
with equations for the corresponding generating functions provided in OEIS.

Not positive definite sequences:
$c_n(-1,-1)=A103779$,
$c_n(-1,-2)=A217361$.

\subsection{Examples for $r=4$.}

By Theorem~\ref{theoremr4realroots} the following sequences are positive definite:
$c_n(1,1,-1)=A063020$, 
$c_n(2,0,-1)=A236339$,
$c_n(2,3,1)=A214692$,
$c_n(3,-3,1)=A006632$,
$c_n(5,-8,4)=A024492$. 
The sequence
$c_n(2,-2,1)=A121988$ (see also $A129442$) 
is positive definite by Proposition~\ref{propositionr4}.

Using Proposition~\ref{propositionr4necessary} or checking Hankel determinants
one can check that the following sequences are not positive definite:
$c_n(1,0,1)=A049140$,
$c_n(1,0,-1)=A063033$,
$c_n(-2,0,-1)=A217362$,
$c_n(0,1,1)=A217358$,
$c_n(0,-1,-1)=A217359$,
$c_n(1,1,1)=A063018$,
$c_n(1,-1,1)=A063019$,
$c_n(3,3,1)=A192946$,
$c_n(-2,0,-1)=A217362$,
$c_n(2,1,2)=A214372$,
$c_n(0,2,1)=A055392$, 
$c_n(2,0,-1)=A236339$. 

\subsection{Monotonic powers.}
Define $\mathbf{a}(k):=(1)^{\rhd k}$, so that
$P_{\mathbf{a}(1)}(w)=P_{1}(w)=w-w^2$ and
\[P_{\mathbf{a}(k)}=P_{1}\circ P_{1}\circ\ldots\circ P_{1}(w),
\]
$k$ composition factors, for example
\begin{align*}
P_{\mathbf{a}(2)}(w)&=w - 2 w^2 + 2 w^3 - w^4,\\
P_{\mathbf{a}(3)}(w)&=w - 3 w^2 + 6 w^3 - 9 w^4 + 10 w^5 - 8 w^6 + 4 w^7 - w^8.
\end{align*}
In view of Corollary~\ref{corollarymonotonic} all the sequences $c_n(\mathbf{a}(k))$
are positive definite.
In particular: 
$c_n(\mathbf{a}(2))=A121988$ (and $A129442$),
$c_n(\mathbf{a}(3))=A158826$, $c_n(\mathbf{a}(4))=A158827$, $c_n(\mathbf{a}(5))=A158828$.

\end{document}